\documentclass[11pt]{amsart}
\usepackage[a4paper, total={5.35 in, 8.3 in}]{geometry}
\usepackage{amsmath,amsfonts,amssymb}

\usepackage{amsfonts}

\usepackage{verbatim}
\usepackage{enumerate}
\usepackage{amsthm}
\usepackage{url}
\usepackage{multicol}
\usepackage[colorlinks]{hyperref}
\usepackage[english]{babel}
\usepackage{tikz}

\newcommand{\PGL}{\mathrm{PGL}}
\newcommand{\K}{\mathbb K}
\newcommand{\ok}{\overline{\K}}

\newcommand{\F}{\mathbb{F}}

\newcommand{\fq}{\mathbb{F}_q}

\newtheorem{theorem}{Theorem}[section]

\newtheorem{proposition}[theorem]{Proposition}
\newtheorem{definition}[theorem]{Definition}
\newtheorem{problem}{Problem}
\newtheorem{lemma}[theorem]{Lemma}
\newtheorem{corollary}[theorem]{Corollary}

\newtheorem{remark}[theorem]{Remark}

\author[J. A. Oliveira]{Jos\'e Alves Oliveira}
\address{Departamento de Matem\'{a}tica,
Universidade Federal de Minas Gerais,
UFMG,
Belo Horizonte MG (Brazil),
 30123-970}

\email{joseufmg@gmail.com}

\author[D. Oliveira]{Daniela Oliveira}
\address{Departamento de Matem\'{a}tica,
Universidade Federal de Minas Gerais,
UFMG,
Belo Horizonte MG (Brazil),
 30123-970}

\email{danielaalvesoliveira@gmail.com}

\author[L. Reis]{Lucas Reis}
\address{Departamento de Matem\'{a}tica,
Universidade Federal de Minas Gerais,
UFMG,
Belo Horizonte MG (Brazil),
 30123-970}

\email{lucasreismat@ufmg.br}

\date{\today
}
\keywords{dynamics of rational functions, irreducible polynomials, finite fields}
\subjclass[2010]{Primary 37P05 Secondary 12E05}

\title{On iterations of rational functions over perfect fields}

\begin{document}

\begin{abstract}
Let $\mathbb K$ be a perfect field of characteristic $p\ge 0$ and let $R\in \mathbb K(x)$ be a rational function. This paper studies the number $\Delta_{\alpha, R}(n)$ of distinct solutions of $R^{(n)}(x)=\alpha$ over the algebraic closure  $\overline{\K}$ of $\K$, where $\alpha\in \ok$ and $R^{(n)}$ is the $n$-fold composition of $R$ with itself. With the exception of some pairs $(\alpha, R)$, we prove that $\Delta_{\alpha, R}(n)=c_{\alpha, R}\cdot d^n+O_{\alpha, R}(1)$ for some $0<c_{\alpha, R}\le 1<d$. The number $d$ is readily obtained from $R$ and we provide estimates on $c_{\alpha, R}$. Moreover we prove that the exceptional pairs $(\alpha, R)$  satisfy $\Delta_{\alpha, R}(n)\le 2$ for every $n\ge 0$, and we fully describe them. We also discuss further questions and propose some problems in the case where $\K$ is finite.
\end{abstract}
\maketitle

\section{Introduction}
For a field $\K$ and a rational function $R\in \K(x)$, we set $R^{(0)}(x)=x$ and, for $n\ge 1$, $R^{(n)}(x)=R^{(n-1)}(R(x))$. The rational function $R^{(n)}(x)\in \K(x)$ is the $n$-th iterate of $R$. When $R=f$ is a polynomial, the compositions $f^{(n)}(x)$ are also polynomials. The iterates of polynomials have been extensively studied in the past few years~\cite{A05,AM00,J08,JB12,K85}; in many of the cases, the authors explore the {\em stable polynomials}. These are the polynomials $f\in \K[x]$ in which all the iterates $f^{(n)}(x), n\ge 1$ are irreducible over $\K$. When $\K$ is finite, the concept of stability is naturally extended to a set $\{f_1, \ldots, f_r\}$ of polynomials~\cite{HBM}. Still in the finite field case, further arithmetic properties of the polynomial iterates $f^{(n)}$ are studied in~\cite{GOS}. The authors explore the number of distinct roots, the number of  irreducible factors over $\K$ and the largest degree of an irreducible factor of $f^{(n)}$ over $\K$. In particular they prove that, under some mild conditions on $f$, those three functions grow (roughly) at least linearly with respect to $n$. 

Some results of~\cite{GOS} were recently improved and extended to iterates $f(g^{(n)}(x))$ in~\cite{R}. Most notably, in~\cite{R} it is proved that up to some exceptional pairs $(f, g)$,  the number $\Delta_n$ of distinct roots of $f(g^{(n)}(x))$ actually grows exponentially. More precisely, the inequality $c_1d^n\le \Delta_n\le c_2d^n$ holds for every sufficiently large $n$, where $c_1, c_2>0$ and $d>1$ do not depend on $n$. However, only the constant $d$ is explicitly given there, making the estimate imprecise. The exceptional pairs $(f, g)$ are fully described and it is direct to verify that, for such pairs, the numbers $\{\Delta_n\}_{ n\ge 0}$ are uniformly bounded by a constant. For more details, see Section~2 of~\cite{R}. Many other arithmetic aspects of the iterates $f(g^{(n)}(x))$ are also studied in~\cite{R}, mainly motivated by Question 18.9 in \cite{B}; this question includes a more general setting, allowing $g$ to be a rational function.

In the context of rational functions, the iterates $f(R^{(n)}(x))$  have not been much explored, but we can naturally extend questions and definitions from the polynomial setting. For instance, if $R_n:=R^{(n)}(x)=g_n/h_n$ with $g_n, h_n$ relatively prime polynomials, we define the polynomial $f_{R}^{(n)}=h_n^{\deg(f)}f(R_n)$. So we may consider the notion of $R$-stability, meaning that $f$ is $R$-stable if all the polynomials $f_R^{(n)}(x)$ are irreducible for every $n\ge 0$. The $R$-stability of polynomials was recently explored for a special class of rational functions $R$ when $\K$ is finite~\cite{PRW}. 

The aim of this paper is to refine the main result in~\cite{R}, extending it to a more general setting. We consider $\K$ a perfect field, $R\in \K(x)$ a rational function of positive degree and study the number $\Delta_{\alpha, R}(n)$ of distinct solutions of $R^{(n)}(x)=\alpha$ over the algebraic closure $\overline{\K}$ of $\K$, where $\alpha\in \ok$.
Our main results, Theorems~\ref{thm:main} and~\ref{thm:sec}, not only recovers the exponential bound in~\cite{R} but also provides a more precise estimate on $\Delta_{\alpha, R}(n)$. We prove that, with the exception of some pairs $(\alpha, R)$, the equality $\Delta_{\alpha, R}(n)=c_{\alpha, R}\cdot d^n+O_{\alpha, R}(1)$ holds for some $0<c_{\alpha, R}\le 1<d$. The parameter $d$ is easily obtained from $R$ and there is an implicit formula for $c_{\alpha, R}$; in particular, we provide estimates on $c_{\alpha, R}$ by means of simple parameters. Similarly to the polynomial case~\cite{R}, the exceptional pairs $(\alpha, R)$ satisfy $\Delta_{\alpha, R}(n)\le 2$ for every $n\ge 0$, and are fully described. However, in contrast to the polynomial setting, we have many more pathological situations; for more details, see Theorem~\ref{thm:sec}. We also discuss the growth of some arithmetic functions related to the factorization of $f_{R}^{(n)}(x)$ when $\K$ is finite, extending some minor results and open problems from~\cite{R}. 

The main idea behind the proof of Theorems~\ref{thm:main} and~\ref{thm:sec} is to provide an implicit formula for $\Delta_{\alpha, R}(n)$, considering the number $r_{\beta, R}$ of solutions of $R(x)=\beta$ with $\beta$ ranging over the elements in $\ok$ such that $R^{(i)}(\beta)=\alpha$ for some $i\ge 0$. With the exclusion of some exceptional $R$'s, we prove that $\Delta_{\alpha, R}(n)=c_{\alpha, R}d^n+O_{\alpha, R}(1)$ for some $0\le c_{\alpha, R}\le 1<d$, where $c_{\alpha, R}$ depends on the numbers $r_{\beta, R}$. We then estimate $c_{\alpha, R}$ by means of parameters such as the degree of the extension $\K(\alpha)/\K$ and the degree of the Wronskian associated to $R$. This allows us to describe the pairs $(\alpha, R)$ in which $c_{\alpha, R}$ vanishes. Along with the exceptional $R$'s, the latter fully describes the pathological cases. 

The paper is organized as follows. In Section 2 we state our main results and provide some important remarks. Section 3 provides background material and important preliminary results. In Section 4 we prove our main results. Finally, in Section 5 we extend some open problems and minor results from~\cite{R}.

\section{Main results}
In this section we state our main results. Before doing so, we need to introduce some basic definitions.  Throughout this paper, $\K$ denotes a perfect field of characteristic $p\ge 0$ and $\overline{\K}$ denotes its algebraic closure.
By a rational function $R\in \K(x)$ we mean a quotient $\frac{g}{h}$, where $g, h\in \K[x]$ are relatively prime polynomials. For simplicity, we sometimes assume that $h$ is monic. The degree of $R$ is $\max\{\deg(g), \deg(h)\}$. Since $\K$ is perfect, if $p>0$, the Frobenius map $a\mapsto a^p$ is an automorphism of $\K$. We have the following definition.

\begin{definition}
Let $\K$ be a perfect field of characteristic $p\ge 0$ and let $R=g/h\in \K(x)$ be a rational function of degree $D\ge 1$. If $p>0$, the $p$-reduction of $R$ is the unique rational function $\tilde{R}\in \K(x)$ such that $R=\tilde{R}^{p^h}$, $h\ge 0$ and $\tilde{R}$ is not of the form $R_0^p$ with $R_0\in \K(x)$. For convention, if $p=0$,  the $p$-reduction of $R$ equals $R$ itself. For each $\alpha\in \ok$, we set $R^{-\infty}(\alpha)=\cup_{n\ge 0}\{\beta\in \ok\,|\, R^{(n)}(\beta)=\alpha\}$, the reversed $R$-orbit of $\alpha$. Also, $\alpha\in \ok$ is $R$-critical if
$$\sup_{n\ge0 }\Delta_{\alpha, R}(n)<+\infty,$$
where $\Delta_{\alpha, R}(n)$ denotes the number of distinct solutions of $R^{(n)}(x)=\alpha$ over $\ok$. 
\end{definition}
Our main results can be stated as follows.

\begin{theorem}\label{thm:main}
Let $\K$ be a perfect field of characteristic $p\ge 0$ and let $R=G/H\in \K(x)$ be a rational function whose $p$-reduction $\tilde{R}=g/h$ has degree $d>1$. Let $d'\ge 0$ be the degree of $W=g'h-gh'$, where $f'$ denotes the formal derivative of $f$. Suppose that $\alpha\in \ok$ is not $R$-critical and set $e=[\K(\alpha): \K]$.  Then there exists $0<c_{\alpha, R}\le 1$ such that 
$$\Delta_{\alpha, R}(n)=c_{\alpha, R}d^n+O_{\alpha, R}(1).$$
The constant $c_{\alpha, R}$ can be implicitly computed from the set $R^{-\infty}(\alpha)$ and we have the following estimates:

\begin{enumerate}
\item If $\alpha$ is not $R$-periodic, then $c_{\alpha, R}\ge \frac{1}{d^2}-\frac{1}{d^3}$. Moreover, 
\begin{enumerate}[(a)]
\item $c_{\alpha, R}\ge 1-\frac{d'}{de}\ge \frac{1}{d}$ if $e>1$;
\item $c_{\alpha, R}\ge 1-\frac{\min\{d-1, d'\}}{d}-\frac{d'-\min\{d-1, d'\}}{d^2}\ge \frac{1}{d^2}$ if $e=1$ and $R^{-\infty}(\alpha)$ does not contain an element $\gamma\in \K$ with $\deg(G-\gamma H)<\deg(R)$.
\end{enumerate}
\item If $\alpha$ is $R$-periodic of period $N$, then $c_{\alpha, R}\ge \frac{1}{4d^2}$. Moreover, 
\begin{enumerate}[(a)]
\item $c_{\alpha, R}\ge 1-\frac{d'}{e(d-1)}\ge \frac{1}{3}$ if $e>2$;
\item $c_{\alpha, R}\ge \frac{1}{d^2}- \frac{1}{d^3}$ if  $e=2$.
\end{enumerate}
\end{enumerate}
\end{theorem}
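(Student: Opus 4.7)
The plan is to derive an explicit recursion for $\Delta_{\alpha, R}(n)$ driven by the \emph{deficiencies} of $R$ at its critical values, and then use Galois and Wronskian considerations to extract lower bounds on the leading constant. Since $\K$ is perfect, raising to a $p$-th power is a bijection on $\ok$, so replacing $R$ by its $p$-reduction $\tilde R = g/h$ preserves the set of distinct solutions of every iterate; I may therefore reduce to $R = g/h$ separable of degree $d$. For $\beta \in \ok$ let $r(\beta)$ denote the number of distinct solutions of $R(x) = \beta$ and set $\delta(\beta) := d - r(\beta) \ge 0$. Then $\delta$ is supported on the finite set $C$ of critical values, and a standard Wronskian/ramification count yields $\sum_{\beta \in C \cap \ok} \delta(\beta) \le d'$, up to a bounded contribution from $\infty$.

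Disjointness of fibres gives the master recursion
\[
\Delta_{\alpha, R}(n+1) = \sum_{\beta \in R^{-n}(\alpha)} r(\beta) = d\,\Delta_{\alpha, R}(n) - E_n, \qquad E_n := \sum_{\beta \in R^{-n}(\alpha) \cap C} \delta(\beta),
\]
with each $E_n$ uniformly bounded by $|C|(d-1)$. Iterating and dividing by $d^n$,
\[
\frac{\Delta_{\alpha, R}(n)}{d^n} = 1 - \sum_{k=0}^{n-1} \frac{E_k}{d^{k+1}},
\]
where the right-hand side converges absolutely. Setting $c_{\alpha, R} := 1 - \sum_{k \ge 0} E_k/d^{k+1}$ gives the asymptotic $\Delta_{\alpha, R}(n) = c_{\alpha, R}\,d^n + O_{\alpha, R}(1)$. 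Since $\alpha$ is not $R$-critical, $\Delta_{\alpha, R}(n)$ is unbounded, which forces $c_{\alpha, R} > 0$.

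For the quantitative bounds I would rewrite
\[
c_{\alpha, R} = 1 - \sum_{\beta \in R^{-\infty}(\alpha) \cap C} \frac{\delta(\beta)}{d^{n(\beta)+1}},
\]
where $n(\beta)$ is the smallest index with $R^{(n(\beta))}(\beta) = \alpha$ (for periodic $\alpha$, each such $\beta$ contributes additionally at $n(\beta)+N, n(\beta)+2N, \ldots$). When $e > 1$, every $\beta \in R^{-\infty}(\alpha)$ satisfies $\K(\alpha) \subseteq \K(\beta)$, so its Galois orbit has size at least $e$ and is permuted equivariantly among the $e$ disjoint backward orbits $R^{-\infty}(\sigma(\alpha))$. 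Summing over conjugates and applying $\sum_{\beta \in C} \delta(\beta) \le d'$ shows that the effective critical mass in $R^{-\infty}(\alpha)$ is at most $d'/e$, yielding $c_{\alpha, R} \ge 1 - d'/(de)$. For $e = 1$ Galois provides no gain, but the level-$0$ term $\delta(\alpha)$ is bounded by $\min(d-1, d')$ while the remainder sits at levels $\ge 1$ and contributes at most $(d' - \min(d-1, d'))/d^2$; the hypothesis excluding $\gamma \in \K$ with $\deg(G - \gamma H) < d$ eliminates a potential extra contribution from the fibre over $\infty$.

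The main obstacle is the $R$-periodic case. If $\alpha$ has period $N$, the forward orbit $\alpha, R(\alpha), \ldots, R^{(N-1)}(\alpha) \subset R^{-\infty}(\alpha)$ contributes to $E_n$ at every level in an arithmetic progression modulo $N$, producing geometric tails of the shape $\delta_i/(d^{i+1}(1 - d^{-N}))$ that must be summed and uniformly bounded. Moreover, periodicity can couple Galois conjugates of $\alpha$ (since $\sigma(\alpha)$ may lie in the forward orbit of $\alpha$), which weakens the Galois-orbit reduction precisely when $e$ is small. The universal bound $c_{\alpha, R} \ge 1/(4d^2)$ and the sharper estimates in cases (2a) and (2b) are obtained by isolating cyclic from non-cyclic critical values in the Wronskian accounting and by carefully handling the small-$e$ regimes where the periodic geometric tails are largest; this case analysis is the most delicate step of the argument.
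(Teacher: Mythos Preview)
Your approach is essentially the paper's: reduce to the $p$-reduction, set up the deficiency recursion $\Delta_{\alpha,R}(n+1)=d\,\Delta_{\alpha,R}(n)-E_n$, extract $c_{\alpha,R}$ as the tail series, bound total deficiency by the Wronskian degree $d'$ (with a correction for the single possible $R$-trivial value, your ``contribution from $\infty$''), and use Galois equivariance among the conjugates of $\alpha$ for the $e>1$ estimates.

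The one organizational difference worth noting is in the periodic case. Rather than tracking the geometric tails $\delta(\beta)\sum_{k\ge 0}d^{-(n(\beta)+kN+1)}$ as you propose, the paper introduces the starred preimage sets $R^{[-m]}(\alpha_i)^*$ (preimages whose forward orbit has not yet entered the cycle) and obtains an additive decomposition of $R^{[-n]}(\alpha)$ over the $N$ cycle points. This yields the closed form
\[
c_{\alpha,R}=\frac{1}{d^N-1}\sum_{i=1}^N d^i\theta_i,\qquad \theta_i=1-\sum_{j\ge 1}n_{\alpha_i,j}(R)\,d^{-j}\ge 0,
\]
together with the single global inequality $\sum_i\sum_j n_{\alpha_i,j}(R)\le \varepsilon+N$ (with $\varepsilon\in\{d',2d-1\}$). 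The nonnegativity of the $\theta_i$ and this bound are what drive the subsequent case analysis in $(e,N,d)$ that produces the universal $1/(4d^2)$ and the refinements (2a), (2b); your assessment that this step is the most delicate is accurate, and the $\theta_i$-formulation is precisely what keeps that casework tractable. Your direct geometric-tail bookkeeping is equivalent but would make the case splits in the periodic regime noticeably messier.
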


\begin{theorem}\label{thm:sec}
Let $\K$ be a perfect field of characteristic $p\ge 0$ and let $R=g/h\in \K(x)$ be a rational function of degree $D$ whose $p$-reduction has degree $d\ge 1$. Fix $\alpha\in \ok$ and set $e=[\K(\alpha): \K]$. Then $\alpha$ is $R$-critical if and only if one of the following holds:

\begin{enumerate}
\item $d=1$, that is, $R(x)=\frac{ax^D+b}{cx^D+d}$ with $ad-bc\ne 0$ and $D=1$ if $p=0$ or $D=p^h, h\ge 0$, otherwise.
\item $d>1, \alpha\in \K$ is not $R$-periodic and 
\begin{enumerate}[(a)]
\item $R(x)=\alpha+\frac{\lambda}{h(x)}$ for some $\lambda\in \K^*$ and some $h\in \K[x]$ of degree $D$;
\item $R(x)=\beta+\frac{\lambda}{(x-\beta)^D-\frac{\lambda}{\beta-\alpha}}$ for some $\beta, \lambda\in \K$ with $\beta\ne \alpha$ and $\lambda\ne 0$.
\end{enumerate}
\item $d>1$, $\alpha\in \ok$ is $R$-periodic of period $N$ and
\begin{enumerate}[(a)]
\item $e=N=2$ and $R(x)=\frac{\overline{\alpha}(x-\alpha)^D-\alpha(x-\overline{\alpha})^D}{(x-\alpha)^D-(x-\overline{\alpha})^D}$, where $\overline{\alpha}\ne \alpha$ is the conjugate root of the minimal polynomial of $\alpha$ over $\K$.
\item $e=2, N=1$ and $R(x)=\frac{\alpha(x-\alpha)^D-\overline{\alpha}(x-\overline{\alpha})^D}{(x-\alpha)^D-(x-\overline{\alpha})^D}$, where $\overline{\alpha}\ne \alpha$ is the conjugate root of the minimal polynomial of $\alpha$ over $\K$.

\item $e=1, d=2, N=3$ and $R(x)=\frac{y_1(x-y_1)^D-(y_1+y_2)(x-y_2)^D}{(x-y_1)^D-2(x-y_2)^D}$,
where $y_1\ne y_2$ are elements of $\K$ and $\alpha\in \{y_1, y_2, \frac{y_1+y_2}{2}\}$.

\item $e=1, N=2$ and $R(x)=\frac{\alpha(x-\alpha)^A-\beta \lambda (x-\beta)^B}{(x-\alpha)^A-\lambda(x-\beta)^B}$, where $\beta\in \K\setminus\{\alpha\}, \lambda\in \K^*$ and $A, B$ are positive integers with $\max\{A, B\}=D$.

\item $e=1, N=d=2\ne p$ and $R(x)=\beta+\frac{(\alpha-\beta)^{D+1}}{(2x-\alpha-\beta)^D+(\alpha-\beta)^D}$ for some $\beta\in \K\setminus\{\alpha\}$.

\item $e=1$, $N=d=2\ne p$ and $R(x)=\beta+\frac{2(\alpha-\beta)^{D+1}}{(x-\alpha)^D+(\alpha-\beta)^D}$ for some $\beta\in \K\setminus\{\alpha\}$.

\item $e=N=1$ and $R(x)=\alpha+\frac{(x-\alpha)^A}{h(x)}$ for some $h\in \K[x]$ and some integer $A\ge 1$ with $h(\alpha)\ne 0$ and $\max\{A, \deg(h)\}=D$.

\item $e=N=1$ and $R(x)=\frac{\beta(x-\beta)^A(x-\alpha)^{D-A}-\alpha\lambda}{(x-\beta)^A(x-\alpha)^{D-A}-\lambda}$, where $\beta\in \K \setminus\{\alpha\}, \lambda\in \K^*$ and  $1\le A< D$.

\item $e=N=1, d=2\ne p$ and $R(x)=\frac{\alpha+\beta}{2}+\frac{(\alpha-\beta)^{D+1}}{4(2x-\alpha-\beta)^D-2(\alpha-\beta)^D}$ for some $\beta \in \K \setminus\{\alpha\}$.
\end{enumerate}
\end{enumerate}
In particular, if $\alpha$ is $R$-critical, the inequality $\Delta_{\alpha, R}(n)\le 2$ holds for every $n\ge 0$ and the reversed $R$-orbit of $\alpha$, $R^{-\infty}(\alpha)$, is finite if and only if one of the following holds:
\begin{enumerate}
\item  $d\ne 1$; 
\item $d=1$ and $\alpha$ is $R$-periodic; 
\item $d=1$ and $R(x)=\frac{ax^D+b}{cx^D+d}$ with $c\ne 0$, and $\frac{a}{c}\in R^{-\infty}(\alpha)$.  
\end{enumerate}
\end{theorem}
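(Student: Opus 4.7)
The plan is to exploit the implicit formula for $c_{\alpha, R}$ derived in the proof of Theorem~\ref{thm:main}: by that theorem, $\Delta_{\alpha,R}(n)$ grows exponentially whenever $c_{\alpha,R}>0$, so $\alpha$ is $R$-critical exactly when $c_{\alpha,R}=0$. The classification therefore reduces to determining when this vanishing occurs and, in each resulting configuration, writing $R$ in closed form and verifying that $\Delta_{\alpha, R}(n)\le 2$.

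I would begin with $d=1$. Here the $p$-reduction $\tilde R$ is a M\"obius transformation, so each iterate $\tilde R^{(n)}$ is again M\"obius and has at most one preimage of any given value. Writing $R=\tilde R^{p^h}$ and expanding using the freshman's dream in characteristic $p$ yields exactly the shape $(ax^D+b)/(cx^D+d)$ appearing in item~(1), together with the bound $\Delta_{\alpha, R}(n)\le 1$. The finiteness criterion in item~(3) of the ``in particular'' statement follows from a direct analysis of backward M\"obius iterations: the reversed orbit is infinite unless it eventually hits the inverse fixed point $a/c$.

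For $d>1$ and $\alpha$ $R$-critical, the implicit formula expresses $c_{\alpha,R}$ as a sum of nonnegative local contributions, one per $\beta\in R^{-\infty}(\alpha)$, that vanish precisely when $R$ is totally ramified at $\beta$. Hence $c_{\alpha,R}=0$ forces $R$ to be totally ramified at every element of the reversed orbit. Since $\deg W\le 2d-2$, the Riemann--Hurwitz formula applied to $R:\pp^1\to\pp^1$ permits at most two totally ramified values, so $R^{-\infty}(\alpha)$ is finite and contained in a two-point locus $\{P_1,P_2\}\subseteq \pp^1(\ok)$. The Galois action, together with the lower bound $c_{\alpha,R}\ge 1-d'/(de)$ from Theorem~\ref{thm:main}, further restricts $e=[\K(\alpha):\K]$ to $\{1,2\}$ and pins down how $\alpha$ sits relative to $\overline{\alpha}$ and, when $\alpha$ is $R$-periodic of period $N$, relative to its periodic cycle.

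The remaining work is normalization. After a M\"obius change of coordinates sending $\{P_1,P_2\}$ to a prescribed pair, total ramification at both points forces $R$ into the shape $c_0+\lambda(x-P_1)^{A}(x-P_2)^{-B}$ with $\max\{A,B\}=D$; reading off coefficients reproduces items~(2a)--(2b) for $\alpha$ non-periodic and (3a)--(3i) for $\alpha$ $R$-periodic. In each case an induction on $n$ confirms $\Delta_{\alpha,R}(n)\le 2$, and the finiteness of $R^{-\infty}(\alpha)$ when $d>1$ follows from the finiteness of the ramified locus. The main obstacle will be the bookkeeping in the $R$-periodic case: the seven subcases (3c)--(3i) correspond to subtly different distributions of the ramified pair $\{P_1,P_2\}$ among the points of the periodic cycle of $\alpha$, the Galois conjugate $\overline{\alpha}$ (when $e=2$), and the ``point at infinity'' in the original coordinate, and completeness of the list requires matching the Galois and permutation data carefully, particularly in the borderline characteristic $d=p$.
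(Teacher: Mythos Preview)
Your reduction to the condition $c_{\alpha,R}=0$ and the treatment of $d=1$ are correct and match the paper. The gap is the geometric shortcut you propose for $d>1$.

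The claim that $c_{\alpha,R}=0$ forces $R$ to be totally ramified (on $\pp^1$) over every $\beta\in R^{-\infty}(\alpha)$, and hence that $R^{-\infty}(\alpha)\subseteq\{P_1,P_2\}$, is false; several of the listed cases are counterexamples. In case~(3h) one has $r_{\alpha,R}=2$: the fixed point $\alpha$ has two distinct preimages $\alpha,\beta$, so $\alpha$ is not a totally ramified value. In cases~(3c), (3e), (3f), (3i) the affine reversed orbit has three elements; for instance in~(3c) with $d=2$ there is a $3$-cycle $y_1\mapsto y_3\mapsto y_2\mapsto y_1$ in which the $\pp^1$-fibre over $y_3$ is $\{y_1,\infty\}$, so only $y_1,y_2$ are totally ramified values while $y_3$ is not, yet $R^{-\infty}(\alpha)=\{y_1,y_2,y_3\}$. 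Your normal form $c_0+\lambda(x-P_1)^A(x-P_2)^{-B}$ therefore cannot produce these five subcases, and the ``two-point locus'' picture breaks down precisely where the classification is most delicate. The underlying issue is that the formula $c_{\alpha,R}=1-\sum_j n_{\alpha,j}(R)d^{-j}$ (and its periodic analogue) is not a sum of independent per-point terms whose vanishing localizes: a point $\beta$ with $r_{\beta,R}=2$ is permitted in the reversed orbit as long as its non-periodic preimage is $R$-trivial (has $r=0$), and this is exactly what happens in (3e), (3f), (3h), (3i).

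The paper's route is to carry forward, from the proof of Theorem~\ref{thm:main}, the explicit list of numerical configurations $(n_{\alpha_i,j}(R))_{i,j}$ at which the lower bounds on $c_{\alpha,R}$ degenerate to zero. The Wronskian bound $\sum_{i,j}n_{\alpha_i,j}\le 2d-1+N$ from Proposition~\ref{lemma:1} (which encodes the Riemann--Hurwitz input) makes this list finite---eleven configurations in all---and each one is a small linear system in $(g,h)$ that determines $R$ explicitly and yields the reversed orbit. You will need to perform this enumeration; the Riemann--Hurwitz count alone does not resolve the combinatorics of how the periodic cycle, the $R$-trivial point, and $\infty$ interact.
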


Theorems~\ref{thm:main} and~\ref{thm:sec} entail that the arithmetic function $\Delta_{\alpha, R}(n)$ is either uniformly bounded by a constant or grows exponentially. The following corollary is a straightforward application of Theorems~\ref{thm:main} and~\ref{thm:sec}  to the case where $R$ is a polynomial.

\begin{corollary}
Let $\K$ be a perfect field of characteristic $p\ge 0$, $\alpha\in \ok$ with $[\K(\alpha): \K]=e$ and let $f\in \K[x]$ be a $D$-degree polynomial whose $p$-reduction $F$ has degree $d>1$. Furthermore, assume that $f$ is not of the form $a(x-\alpha)^D+\alpha$ for some $a\in \K$ and set $d'=\deg(F')\le d-1$. Then there exists a constant $0< c_{\alpha, f}\le 1$ such that 
$$\Delta_{\alpha, f}(n)=c_{\alpha, f}d^n+O_{\alpha, f}(1).$$
Moreover,  $c_{\alpha, f}\ge \frac{1}{4d^2}$ if $\alpha$ is $f$-periodic and $c_{\alpha, f}\ge 1-\frac{d'}{d}\ge \frac{1}{d}$, otherwise.
\end{corollary}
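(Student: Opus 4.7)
The plan is to deduce the corollary from Theorems~\ref{thm:main} and~\ref{thm:sec} applied to $R = f$. In this setting $G = f$, $H = 1$, and the $p$-reduction $\tilde{R} = F$ has $g = F$, $h = 1$, so the Wronskian $W = g'h - gh'$ reduces to $F'$ and its degree matches the $d' = \deg(F')$ of the corollary; in particular, $d' \leq d-1$ automatically.

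The main step is to show that $\alpha$ is not $f$-critical, using the classification in Theorem~\ref{thm:sec}. Under the hypothesis $d > 1$ one must go through items (2)(a)--(b) and (3)(a)--(i) and verify that, of all the rational functions listed, the only one that can be a polynomial is case (3)(g) with $h$ a constant, which then coincides with the excluded form $f(x) = a(x-\alpha)^D + \alpha$. In each of the other cases, the denominator in the given presentation is a nonconstant polynomial whose roots produce finite poles of $R$. For instance, (2)(a)--(b) and (3)(e)--(f),(h)--(i) have denominators of degree $\geq d > 1$ in reduced form after writing $R = c + \lambda/(\mathrm{denominator})$; for (3)(a)--(b), at any root $\gamma$ of $(x-\alpha)^D - (x-\overline{\alpha})^D$ the numerator evaluates to $\pm(\alpha-\overline{\alpha})(\gamma-\alpha)^D \neq 0$ (noting first that $\gamma \neq \alpha$, since the denominator does not vanish at $\alpha$); and (3)(c)--(d) are handled similarly by computing the numerator of $R - \mathrm{const}$. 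In case (3)(g), the hypothesis $h(\alpha) \neq 0$ forces $\gcd((x-\alpha)^A, h) = 1$, so the given presentation is already in lowest terms, and $R$ is a polynomial if and only if $h$ is constant, in which case $A = D$ and the excluded form is recovered.

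With $\alpha$ not $f$-critical, Theorem~\ref{thm:main} yields $\Delta_{\alpha, f}(n) = c_{\alpha, f} d^n + O_{\alpha, f}(1)$ with $0 < c_{\alpha, f} \leq 1$. If $\alpha$ is $f$-periodic, part (2) immediately gives $c_{\alpha, f} \geq 1/(4d^2)$. Otherwise I split on $e$: for $e > 1$, item (1)(a) gives $c_{\alpha, f} \geq 1 - d'/(de) \geq 1 - d'/d$; for $e = 1$, item (1)(b) applies since for every $\gamma \in \K$ one has $\deg(G - \gamma H) = \deg(f - \gamma) = D = \deg(R)$, so the excluded condition on $R^{-\infty}(\alpha)$ is vacuously satisfied. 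Because $d' \leq d - 1$ implies $\min\{d-1, d'\} = d'$, item (1)(b) simplifies to $c_{\alpha, f} \geq 1 - d'/d$. Finally, $d' \leq d - 1$ yields $1 - d'/d \geq 1/d$. The only genuine, if routine, obstacle is the case-by-case inspection of Theorem~\ref{thm:sec} in the second paragraph; the rest is a direct specialization of the two main theorems.
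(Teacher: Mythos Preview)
Your argument is correct and follows exactly the route the paper indicates: the corollary is obtained by specializing Theorems~\ref{thm:main} and~\ref{thm:sec} to the polynomial case $R=f$, with $H=h=1$ and $W=F'$. The paper does not spell out the case-by-case check of Theorem~\ref{thm:sec}, but your verification that only item~(3)(g) with constant $h$ yields a polynomial (namely the excluded form $a(x-\alpha)^D+\alpha$) is accurate, and your derivation of the bound $c_{\alpha,f}\ge 1-d'/d$ from items~(1)(a) and~(1)(b) of Theorem~\ref{thm:main} using $d'\le d-1$ and the vacuity of the $R$-trivial condition for polynomials is exactly what is needed.
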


\section{Preparation}
In this section we provide some definitions and important preliminary results. Throughout this section, unless otherwise stated, $R\in \K(x)$ stands for a rational function of degree $D$ whose $p$-reduction has degree $d\ge 1$.

\begin{definition}
Let $R=f/g\in \K(x)$ be a rational function of degree $D\ge 1$ and $\alpha\in \ok$.
\begin{enumerate}[(i)]
\item $r_{\alpha, R}\ge 0$ is the number of distinct roots of $g-\alpha h$ over $\ok$;
\item $\alpha$ is $R$-trivial if the polynomial $g-\alpha h$ has degree at most $D-1$.
\item $\alpha$ is $R$-periodic if there exists an integer $N\ge 1$ such that $R^{(N)}(\alpha)=\alpha$. In affirmative case, the smallest integer with this property is the period of $\alpha$.
\end{enumerate}
\end{definition}

\begin{definition}
For a rational function $R\in \K(x)$ of degree $D$ whose $p$-reduction has degree $d\ge 1$, let $\sigma_R$ be the unique automorphism of $\ok$ satisfying $\sigma_R(a^{D/d})=a$ for every $a\in \ok$. 
\end{definition}

\begin{remark}\label{rem:aut}We observe that $\sigma_R$ is the identity map if $d=D$. If $d\ne D$, then $\K$ has characteristic $p>0$ and $\sigma_R$ is just the inverse of a power of the Frobenius automorphism $a\mapsto a^p$. Furthermore, for $y, \alpha\in \ok$, we have that $R(y)=\alpha$ if and only if $\tilde{R}(y)=\sigma_R(\alpha)$, where $\tilde{R}$ is the $p$-reduction of $R$. 
\end{remark}

The following result is straightforward.

\begin{lemma}\label{lem:basic}
Let $R=f/g\in \K(x)$ be a rational function and let $\tilde{R}$ be its $p$-reduction, $d=\deg(\tilde{R})$. Then for every $\alpha\in \ok$, we have that $r_{\alpha, R}=r_{\sigma_R(\alpha), \tilde{R}}$. In particular, $r_{\alpha, R}\le d$ for every $\alpha\in \ok$.
\end{lemma}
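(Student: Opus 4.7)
The plan is to deduce the lemma directly from Remark~\ref{rem:aut} together with the elementary fact that a nonzero polynomial has at most as many distinct roots as its degree; the statement is labelled straightforward, and I expect the argument to be short.

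First, I would unpack $r_{\alpha,R}$ as the cardinality of the preimage set $S_\alpha=\{y\in\ok:R(y)=\alpha\}$, which is exactly what is counted by the distinct-roots definition (the coprimality hypothesis on the numerator and denominator of $R$ ensures no spurious cancellation at the roots). Next, I would apply Remark~\ref{rem:aut} pointwise: for every $y\in\ok$, the equation $R(y)=\alpha$ holds if and only if $\tilde{R}(y)=\sigma_R(\alpha)$. Because $\sigma_R$ is a well-defined automorphism of $\ok$, this equivalence identifies $S_\alpha$ with the preimage set of $\sigma_R(\alpha)$ under $\tilde{R}$, whose cardinality is by definition $r_{\sigma_R(\alpha),\tilde{R}}$. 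Passing to cardinalities then yields the desired equality $r_{\alpha,R}=r_{\sigma_R(\alpha),\tilde{R}}$.

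For the ``in particular'' bound, I would write $\tilde{R}=\tilde{g}/\tilde{h}$ with $\gcd(\tilde{g},\tilde{h})=1$ and $\max\{\deg\tilde{g},\deg\tilde{h}\}=d$, so that $r_{\sigma_R(\alpha),\tilde{R}}$ equals the number of distinct roots of the polynomial $\tilde{g}(y)-\sigma_R(\alpha)\tilde{h}(y)$. Coprimality of $\tilde{g}$ and $\tilde{h}$ forbids this polynomial from vanishing identically, and its degree is at most $d$; the degree bound for the number of roots then gives $r_{\sigma_R(\alpha),\tilde{R}}\le d$, which combined with the identity just established yields $r_{\alpha,R}\le d$.

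I do not foresee any real obstacle: modulo a small care about the possibility that $\sigma_R(\alpha)$ is $\tilde{R}$-trivial (which merely lowers the effective degree, and therefore still respects the bound $d$), the proof is essentially a single application of Remark~\ref{rem:aut} followed by a polynomial-degree count.
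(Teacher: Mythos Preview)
Your proposal is correct and follows exactly the approach the paper intends: the paper does not write out a proof but simply declares the lemma ``straightforward'' immediately after Remark~\ref{rem:aut}, and your argument is precisely the unpacking of that remark together with the obvious degree bound. There is nothing to add.
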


\begin{definition}
Let $R\in \K(x)$ and $\alpha\in \ok$. For each $n\ge 0$, set $R^{[-n]}(\alpha)=\{\beta\in \ok\,|\, R^{(n)}(\beta)=\alpha\}$ and let $R^{[-n]}(\alpha)^{*}$ be the set of elements $\beta\in R^{[-n]}(\alpha)$ such that no element $R^{(i)}(\beta)$ with $0\le i\le n-1$ is $R$-periodic. 
Moreover, we set $\Delta_{\alpha, R}(n)=\# R^{[-n]}(\alpha)$ and $\Delta_{\alpha, R}(n)^{*}=\# R^{[-n]}(\alpha)^{*}$.
\end{definition}

In the proof of our main results, an implicit formula for $\Delta_{\alpha, R}(m)^*$ is required. In this context, the following definition is crucial.

\begin{definition}\label{def:main}
Let $R\in \K(x)$ be a rational function whose $p$-reduction has degree $d>1$. For each $\alpha \in \ok$ and each integer $j\ge 2$, set $$n_{\alpha, j}(R)=\sum_{\gamma\in R^{[1-j]}(\alpha)^{*}} (d-r_{\gamma, R})\ge 0.$$ For convention, we set $n_{\alpha, 1}(R)=d-r_{\alpha, R}+1$ if $\alpha$ is $R$-periodic and $n_{\alpha, 1}(R)=d-r_{\alpha, R}$, otherwise.
\end{definition}

We obtain the following result.

\begin{proposition}\label{prop:aux}
Let $R\in \K(x)$ be a rational function whose $p$-reduction has degree $d>1$. Then for every $m\ge 1$ and every $\alpha\in \ok$, we have that 
$$\Delta_{\alpha, R}(m)^*=d^{m}-\sum_{j=1}^{m}n_{\alpha, j}(R)\cdot d^{m-j}=d^m\left(1-\sum_{j=1}^{m}n_{\alpha, j}(R)d^{-j}\right).$$
\end{proposition}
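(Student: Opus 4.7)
The plan is to prove the formula by induction on $m$, establishing a simple recursion of the form
\[
\Delta_{\alpha,R}(m)^{*} = d\cdot \Delta_{\alpha,R}(m-1)^{*} - n_{\alpha,m}(R) \qquad (m\ge 2),
\]
with the appropriate base case at $m=1$. The key fact is that the map $\beta\mapsto R(\beta)$ gives a surjection $R^{[-m]}(\alpha)^{*}\twoheadrightarrow R^{[1-m]}(\alpha)^{*}$ whose fiber over $\gamma$ has size $r_{\gamma,R}$.

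First I would show this fiber structure. If $\beta\in R^{[-m]}(\alpha)^{*}$ and $\gamma=R(\beta)$, then $R^{(m-1)}(\gamma)=\alpha$, and the non-periodicity of $R^{(i)}(\gamma)=R^{(i+1)}(\beta)$ for $0\le i\le m-2$ follows from the definition of $R^{[-m]}(\alpha)^{*}$; hence $\gamma\in R^{[1-m]}(\alpha)^{*}$. Conversely, fix $\gamma\in R^{[1-m]}(\alpha)^{*}$: by Lemma~\ref{lem:basic} the equation $R(x)=\gamma$ has exactly $r_{\gamma,R}$ solutions in $\ok$, and none of these solutions $\beta$ is $R$-periodic, for if $R^{(N)}(\beta)=\beta$ then $\gamma=R(\beta)$ lies in the $R$-orbit of $\beta$, making $\gamma$ itself $R$-periodic and contradicting $\gamma\in R^{[1-m]}(\alpha)^{*}$. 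Thus each such $\beta$ lies in $R^{[-m]}(\alpha)^{*}$ and
\[
\Delta_{\alpha,R}(m)^{*}=\sum_{\gamma\in R^{[1-m]}(\alpha)^{*}} r_{\gamma,R}= d\cdot \Delta_{\alpha,R}(m-1)^{*}-\sum_{\gamma\in R^{[1-m]}(\alpha)^{*}}(d-r_{\gamma,R}) = d\cdot \Delta_{\alpha,R}(m-1)^{*}-n_{\alpha,m}(R),
\]
which is the desired recursion.

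Next I would verify the base case $m=1$, that is, $\Delta_{\alpha,R}(1)^{*}=d-n_{\alpha,1}(R)$. When $\alpha$ is not $R$-periodic, no preimage $\beta$ of $\alpha$ under $R$ can be periodic (same argument as above), so $\Delta_{\alpha,R}(1)^{*}=r_{\alpha,R}=d-(d-r_{\alpha,R})$, matching the convention. When $\alpha$ is $R$-periodic of period $N$, exactly one preimage of $\alpha$ is $R$-periodic, namely $R^{(N-1)}(\alpha)$: any periodic preimage must lie in the cycle of $\alpha$, and in that cycle only $R^{(N-1)}(\alpha)$ maps to $\alpha$ under $R$; hence $\Delta_{\alpha,R}(1)^{*}=r_{\alpha,R}-1=d-(d-r_{\alpha,R}+1)$, again matching the convention. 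A routine induction on $m$, inserting the recursion into the hypothesized closed form for $m-1$, then yields
\[
\Delta_{\alpha,R}(m)^{*}=d^{m}-\sum_{j=1}^{m} n_{\alpha,j}(R)\,d^{m-j}.
\]

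The main obstacle is the bookkeeping in the base case, specifically confirming that in the periodic situation exactly one preimage of $\alpha$ is $R$-periodic; everything else is mechanical once the periodicity-preserving properties of the preimage map are in place.
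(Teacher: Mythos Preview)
Your proof is correct and follows essentially the same route as the paper: induction on $m$ via the recursion $\Delta_{\alpha,R}(m)^{*}=d\,\Delta_{\alpha,R}(m-1)^{*}-n_{\alpha,m}(R)$, obtained by summing $r_{\gamma,R}$ over $\gamma\in R^{[1-m]}(\alpha)^{*}$. The paper is terser---it declares the base case immediate from the definition and states the fiber decomposition without justifying that preimages of non-periodic points are non-periodic---so your version simply makes explicit the bookkeeping the paper leaves to the reader.
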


\begin{proof}
	We proceed by induction on $m$. The case $m=1$ follows directly by the definition of $n_{\alpha, 1}(R)$. Suppose that the result holds for an integer $m\ge 1$. We observe that the elements of $R^{[-m-1]}(\alpha)^*$ comprise the roots of $R(x)=\gamma$ with $\gamma\in R^{[-m]}(\alpha)$. The latter implies that
	$$\Delta_{\alpha, R}(m+1)^*=d \Delta_{\alpha, R}(m)^*-\sum_{\gamma\in R^{[-m]}(\alpha)^{*}} (d-r_{\gamma, R})= d\Delta_{\alpha, R}(m)^*-n_{\alpha, m+1}(R),$$
from where the result follows.
\end{proof}

In the following proposition we provide estimates on the numbers $n_{\alpha, j}(R)$.

\begin{proposition}\label{lemma:1}
Let $R\in \K(x)$ be a rational function  whose $p$-reduction $\tilde{R}=g/h$ has degree $d>1$. For each $\alpha\in \ok$ 
set $\kappa_{\alpha, R}=\sum_{j\ge 1}n_{\alpha, j}(R)$,
and let $\delta_{\alpha, R}=1$ or $0$, according to whether $\alpha$ is $R$-periodic or not, respectively. If $d'=\deg(g'h-gh')$, the following hold:
 
 \begin{enumerate}[(i)]
 \item for distinct elements $\alpha_1, \ldots, \alpha_{\ell}\in \ok$, we have that $$\sum_{i=1}^{\ell}\kappa_{\alpha_i, R}\le \varepsilon+\sum_{i=1}^{\ell}\delta_{\alpha_i, R},$$ 
where $\varepsilon=d'$ if no set $R^{-\infty}(\alpha_i)$ contains an $R$-trivial element and $\varepsilon=2d-1$, otherwise;

 \item $\kappa_{\alpha, R}\le \frac{d'}{e}+\delta_{\alpha, R}$ if $[\K(\alpha): \K]=e>1$.
 
 \end{enumerate}
 \end{proposition}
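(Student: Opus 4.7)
My plan is to prove a single Wronskian-type inequality and then derive both parts of the proposition from it.

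\textbf{Step 1 (Wronskian bound).} I would first show that for any finite set $S\subseteq\ok$,
\[
\sum_{\gamma\in S}(d-r_{\gamma,R})\le d'
\]
if no element of $S$ is $R$-trivial, and $\le 2d-1$ otherwise. The core reason is that a root $\beta\in\ok$ of $g-\sigma_R(\gamma)h$ with multiplicity $m_\beta\ge 2$ makes $W=g'h-gh'$ vanish at $\beta$ to order exactly $m_\beta-1$ (the hypothesis $\gcd(g,h)=1$ forces $h(\beta)\ne 0$), and distinct $\gamma$'s yield disjoint preimage sets. Via Lemma~\ref{lem:basic} the ``excess multiplicity'' contributions of the non-$R$-trivial elements of $S$ therefore sum to at most $\deg W=d'$. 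The unique possible $R$-trivial element of $S$ corresponds to the value $R(\infty)$, and contributes an additional defect equal to the ramification index $e_\infty$ of $\tilde{R}$ at infinity; by Riemann-Hurwitz applied to $\tilde{R}\colon\pp^1\to\pp^1$ (together with a tame-ramification bound in positive characteristic, which suffices since $\tilde{R}$ is separable) one gets $d'+(e_\infty-1)\le 2d-2$, hence $d'+e_\infty\le 2d-1$.

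\textbf{Step 2 (Unwinding $\kappa_{\alpha,R}$).} Setting $S_\alpha=\{\alpha\}\cup\bigcup_{k\ge 1}R^{[-k]}(\alpha)^{*}$, a direct expansion of Definition~\ref{def:main} gives
\[
\kappa_{\alpha,R}=\delta_{\alpha,R}+\sum_{\gamma\in S_\alpha}(d-r_{\gamma,R}).
\]

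\textbf{Step 3 (Proof of (i) and (ii)).} For (i), I would apply the bound from Step~1 to the union $\bigcup_{i=1}^{\ell}S_{\alpha_i}$: provided the $S_{\alpha_i}$'s are pairwise disjoint,
\[
\sum_{i=1}^{\ell}\kappa_{\alpha_i,R}-\sum_{i=1}^{\ell}\delta_{\alpha_i,R}=\sum_{\gamma\in\bigsqcup_i S_{\alpha_i}}(d-r_{\gamma,R})\le\varepsilon,
\]
and the disjoint union contains an $R$-trivial element iff some $R^{-\infty}(\alpha_i)$ does. For (ii), apply (i) to the Galois conjugates $\alpha_1=\alpha,\alpha_2,\dots,\alpha_e$ of $\alpha$ over $\K$. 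Because $R\in\K(x)$, Galois action bijects $S_{\alpha_i}$ with $S_{\alpha_j}$ and preserves $r_{\gamma,R}$, so $\kappa_{\alpha_i,R}=\kappa_{\alpha,R}$ and $\delta_{\alpha_i,R}=\delta_{\alpha,R}$ for every $i$. Moreover, the unique possible $R$-trivial element lies in $\K$; were it in some $R^{-\infty}(\alpha_i)$, iteration of $R\in\K(x)$ would place $\alpha_i$ in $\K$, contradicting $e>1$. Hence $\varepsilon=d'$ in this setting, and dividing the inequality from (i) by $e$ yields $\kappa_{\alpha,R}\le d'/e+\delta_{\alpha,R}$.

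\textbf{Main obstacle.} The delicate point is the pairwise-disjointness of $S_{\alpha_1},\dots,S_{\alpha_\ell}$ in (i): an overlap occurs precisely when some $\alpha_i$ lies in the non-periodic forward orbit of another $\alpha_j$. In the Galois-conjugate setting used for (ii), such an overlap would force a rigid algebraic condition $\sigma(\alpha)=R^{(k)}(\alpha)$ for some $\sigma\in\mathrm{Gal}(\ok/\K)$ and $k\ge 1$; a careful case analysis either rules this out or absorbs the resulting double-counting into the $\delta$-terms on the right-hand side.
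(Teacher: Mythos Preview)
Your overall plan coincides with the paper's: bound the total ``excess multiplicity'' $\sum_\gamma(d-r_{\gamma,R})$ by the Wronskian degree, then sum over the union $\bigcup_j R^{[1-j]}(\alpha_i)^*$. The paper simply asserts that these sets are pairwise disjoint, so you are right to flag this as the crux.

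For (ii) your resolution is left vague, but there is a clean argument. If $\alpha_j=R^{(k)}(\alpha_i)$ for conjugates $\alpha_i,\alpha_j$ and some $k\ge1$, pick $\tau\in\mathrm{Gal}(\ok/\K)$ with $\tau(\alpha_i)=\alpha_j$; since $R\in\K(x)$, induction gives $\tau^m(\alpha_i)=R^{(mk)}(\alpha_i)$, and taking $m$ equal to the (finite) order of $\tau$ on the normal closure of $\K(\alpha)$ yields $\alpha_i=R^{(mk)}(\alpha_i)$, so $\alpha$ is $R$-periodic. Hence either all conjugates are periodic---and then the $*$-condition forces disjointness, since every $\beta\in R^{[-m]}(\alpha_i)^*$ with $m\ge1$ has $\alpha_i$ as its \emph{first} periodic forward image---or no conjugate lies in another's forward orbit, and disjointness is immediate. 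No ``absorbing double-counting'' is needed. For (i) in full generality, however, your concern is fatal rather than merely delicate: with $R(x)=x^2+2$ over $\mathbb C$, $\alpha_1=6$, $\alpha_2=2$, one computes $\kappa_{\alpha_1,R}=\kappa_{\alpha_2,R}=1$, $d'=1$, no $R$-trivial element, and neither $\alpha_i$ periodic, so $2\not\le 1$. The paper only invokes (i) in situations (a single $\alpha$, a periodic cycle, or Galois conjugates) where the sets are genuinely disjoint.

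A smaller gap: your Riemann--Hurwitz step ``$d'+(e_\infty-1)\le 2d-2$'' is unsafe in characteristic $p>0$, since $\tilde R$ separable does not preclude wild ramification (e.g.\ $\tilde R(x)=x^p+x$), and at a wildly ramified point the Wronskian vanishes to order $\ge m$ rather than $m-1$. The paper bypasses this by a direct computation: if the unique $R$-trivial value $\lambda$ has $\deg(g-\sigma_R(\lambda)h)=s$, then $W=(g-\sigma_R(\lambda)h)'h-(g-\sigma_R(\lambda)h)h'$ has degree at most $d+s-1$, whence $(d-s)+d'\le 2d-1$.
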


\begin{proof}
From Lemma~\ref{lem:basic}, it follows that $r_{\gamma, R}\le d$. For each $\gamma\in \ok$, let $T_{\gamma}$ be the degree of $g-\sigma_R(\gamma) h$. We observe that the inequality $T_{\gamma}<d$ holds for at most one element $\gamma\in \ok$ and, in this case, we necessarily have that $\gamma\in \K$.

Since $\tilde{R}$ has degree $d$, Remark~\ref{rem:aut} entails that $d-r_{\gamma, R}>0$ if and only if $g-\sigma_R(\gamma) h$ has $(T_{\gamma}-r_{\gamma, R})$ common roots with the polynomial $g'-\sigma_R(\gamma)h'$, multiplicities counted. In particular, $g-\sigma_R(\gamma) h$ has $(T_{\gamma}-r_{\gamma, R})$ common roots with the Wronskian $W=g'h-gh'$, multiplicities counted. From construction, the polynomials $g$ and $h$ are relatively prime and their formal derivatives cannot vanish simultaneously. In particular, $W$ does not vanish and a detailed account on the possible degrees of $g$ and $h$ entails that $d'=\deg(W)\le 2d-2$.  

We prove items (i) and (ii) separately.

\begin{enumerate}[(i)]
\item We observe that the sets $R^{[1-j]}(\alpha_i)^*$ with $j\ge 1$ and $1\le i\le \ell$ are pairwise disjoint. Therefore, from the previous remarks we obtain that
\begin{equation}\label{eq:t}\sum_{i=1}^{\ell}\sum_{j\ge 1}\sum_{\gamma\in R^{[1-j]}(\alpha_i)^*}(T_{\gamma}-r_{\gamma,R})\le d'.\end{equation}
If no set $R^{-\infty}(\alpha_i)$ contains an $R$-trivial element, it follows that $T_{\gamma}=d$ for every $\gamma\in R^{-\infty}(\alpha_i)$. In this case, Eq.~\eqref{eq:t} implies that 
$$\sum_{i=1}^{\ell}(\kappa_{\alpha_i, R}-\delta_{\alpha_i, R})=\sum_{i=1}^{\ell}\sum_{j\ge 1}\sum_{\gamma\in R^{[1-j]}(\alpha_i)^*}(d-r_{\gamma,R})\le d'.$$
Suppose that $R^{-\infty}(\alpha_i)$ contains an $R$-trivial element $\lambda \in\K$ for some $1\le i\le \ell$. We have that $g(x)=\lambda h(x)+h_0(x)$, where $\deg(h_0)=s$ with $0\le s<d$. Therefore, $T_{\lambda}=s$ and a simple calculation yields $d'=\deg(W)\le d+s-1$. Since there exists at most one $R$-trivial element, we have that 
\begin{align*}\sum_{i=1}^{\ell}\left(\kappa_{\alpha_i, R}-\delta_{\alpha, i, R}\right)& =(d-s)+\sum_{i=1}^{\ell}\sum_{j\ge 1}\sum_{\gamma\in R^{[1-j]}(\alpha_i)^*}(T_{\gamma}-r_{\gamma,R}) \\ {} & \le d-s+d'\le 2d-1.\end{align*}

\item  Fix $\alpha\in \ok$ with $[\K(\alpha): \K]=e>1$, hence $\alpha\not\in \K$. Let $F$ be the minimal polynomial of $\alpha$ over $\K$ and let $\mathbb L\subseteq \ok$ be the splitting field of $F$. Since $\K$ is a perfect field, the roots $\alpha:=\alpha_1, \ldots, \alpha_e\in \ok$ of $F$ are all distinct and the extension $\mathbb L/\mathbb K$ is Galois. Since the Galois group of an irreducible polynomial acts transitively on its roots, for each $1\le i\le e$ there exists a $\K$-automorphism $\tau_i: \to L$ such that $\tau_i(\alpha)=\alpha_i$. Since $R\in \K(x)$, by extending these automorphisms to $\ok$  we conclude that $\kappa_{\alpha_i, R}=\kappa_{\alpha, R}$ and $\delta_{\alpha_i, R}=\delta_{\alpha, R}$ for every $1\le i\le e$. Since $e>1$, no element $\alpha_i$ lies in $\K$. Therefore, the sets $R^{-\infty}(\alpha_i)$ do not contain $R$-trivial elements. Applying item (i) for the elements $\alpha_1, \ldots, \alpha_e$, we obtain that
$$e\cdot \kappa_{\alpha, R}=\sum_{i=1}^e\kappa_{\alpha_i, R}\le d'+\sum_{i=1}^e\delta_{\alpha_i, R}=d'+e\cdot \delta_{\alpha, R},$$
from where the result follows.
\end{enumerate}
\end{proof}
\section{Proof of the main results}
Before proceeding to the proof of Theorems~\ref{thm:main} and~\ref{thm:sec}, we introduce a useful definition.

\begin{definition}\label{def:c}
Let $R\in \K(x)$ be a rational function of degree $D$ whose $p$-reduction has degree $d>1$. For each $\alpha\in \ok$ and $j\ge 1$, let $n_{\alpha, j}(R)$ be as in Definition~\ref{def:main}. If $\alpha$ is not $R$-periodic, we set  
$$c_{\alpha, R}=1-\sum_{j\ge 1}n_{\alpha, j}(R)d^{-j}.$$
 If $\alpha$ is $R$-periodic and $\alpha_1, \ldots, \alpha_N=\alpha$ are the distinct $R$-periodic elements in the $R$-orbit of $\alpha$, we set
$$c_{\alpha, R}=\frac{1}{d^N-1}\sum_{i=1}^Nd^i\left(1-\sum_{j\ge 1}n_{\alpha_i, j}(R)d^{-j}\right).$$
\end{definition}
Proposition~\ref{lemma:1} entails that the sum $\sum_{j\ge 1}n_{\alpha, j}(R)d^{-j}$ contains only finitely many nonzero terms; this fact is frequently used. We obtain the following estimate.

\begin{proposition}\label{prop:est}
Let $R\in \K(x)$ be a rational function whose $p$-reduction $Q$ has degree $d>1$. For every $\alpha\in \K$, we have that
$$\Delta_{\alpha, R}(n)=c_{\alpha, R}\cdot d^n+L_{\alpha, R}(n),$$
where $L_{\alpha, R}(n)=O_{\alpha, R}(1)$ and, in fact, $L_{\alpha, R}(n)=0$ if $\alpha$ is not periodic and $n$ is sufficiently large.

\end{proposition}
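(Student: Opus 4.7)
The plan is to split into two cases depending on whether $\alpha$ is $R$-periodic.

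\emph{Case 1: $\alpha$ not $R$-periodic.} First I would observe that $R^{[-n]}(\alpha)=R^{[-n]}(\alpha)^*$: if some $R^{(i)}(\beta)$ with $0\le i\le n-1$ were periodic, then $\alpha=R^{(n-i)}(R^{(i)}(\beta))$ would lie in the forward orbit of a periodic point and hence be periodic, contradicting the assumption. Proposition~\ref{prop:aux} then gives $\Delta_{\alpha,R}(n)=d^n\bigl(1-\sum_{j=1}^{n}n_{\alpha,j}(R)d^{-j}\bigr)$. By Proposition~\ref{lemma:1}, $\kappa_{\alpha,R}$ is finite, so $n_{\alpha,j}(R)=0$ for all $j>J$ for some $J=J(\alpha,R)$. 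For $n\ge J$ the truncation at $j=n$ is vacuous, and the formula reduces to $c_{\alpha,R}d^n$ exactly, so $L_{\alpha,R}(n)=0$.

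\emph{Case 2, decomposition: $\alpha$ is $R$-periodic of period $N$.} Label the orbit so that $\alpha_i=R^{(i)}(\alpha)$ for $i=1,\ldots,N$, in particular $\alpha_N=\alpha$. For each $\beta\in R^{[-n]}(\alpha)$ the index $i_\beta:=\min\{0\le i\le n:R^{(i)}(\beta)\text{ is $R$-periodic}\}$ is well defined since $R^{(n)}(\beta)=\alpha$ is periodic, yielding a partition $R^{[-n]}(\alpha)=\bigsqcup_{i=0}^{n}A_i$. The unique element of $A_0$ is the periodic preimage $\alpha_{k(0)}$, where $k(i)\in\{1,\ldots,N\}$ denotes the representative of $i-n\pmod N$. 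For $i\ge 1$, the value $R^{(i)}(\beta)$ must be the unique cycle element $\alpha_{k(i)}$ satisfying $R^{(n-i)}(\alpha_{k(i)})=\alpha$, which identifies $A_i=R^{[-i]}(\alpha_{k(i)})^*$, and hence $|A_i|=\Delta_{\alpha_{k(i)},R}(i)^*$.

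\emph{Case 2, asymptotics.} Applying Proposition~\ref{prop:aux} to each $|A_i|$, and using that only finitely many $n_{\alpha_k,j}(R)$ are nonzero (Proposition~\ref{lemma:1}), summing over $i$ and absorbing the finitely many correction terms into an $O_{\alpha,R}(1)$ error yields
\[
\Delta_{\alpha,R}(n)=\sum_{i=1}^{n}d^{i}S_{k(i)}+O_{\alpha,R}(1),\qquad S_k:=1-\sum_{j\ge 1}n_{\alpha_k,j}(R)d^{-j}.
\]
The sequence $(S_{k(i)})_{i\ge 1}$ is periodic in $i$ with period $N$, and over any $N$ consecutive indices it exhausts $S_1,\ldots,S_N$. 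Reindexing $\ell=n-i$ and collecting the geometric progression in powers of $d^{-N}$ gives
\[
\sum_{i=1}^{n}d^{i}S_{k(i)}=\frac{d^n}{d^N-1}\sum_{j=1}^{N}d^{j}S_{j}+O_{\alpha,R}(1)=c_{\alpha,R}d^n+O_{\alpha,R}(1),
\]
matching Definition~\ref{def:c}. The main bookkeeping difficulty is pinning down the cyclic indexing of the orbit: one must verify that the labeling $\alpha_i=R^{(i)}(\alpha)$ with $\alpha_N=\alpha$ is precisely the one implicit in Definition~\ref{def:c}, so that the constant recovered from the geometric sum equals $c_{\alpha,R}$ itself and not a cyclic permutation of its defining expression.
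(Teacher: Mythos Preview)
Your proposal is correct and follows essentially the same route as the paper: both decompose $R^{[-n]}(\alpha)$ as the disjoint union of a single periodic point and the starred sets $R^{[-m]}(\alpha_i)^*$, then invoke Proposition~\ref{prop:aux} and sum the resulting geometric series. The only difference is cosmetic---the paper indexes the union by pairs $(i,m)$ with $N\mid n+i-m$ while you index by the level $i$ alone via $k(i)\equiv i-n\pmod N$---and your labeling worry is moot, since the paper's divisibility condition is precisely equivalent to the convention $\alpha_i=R^{(i)}(\alpha)$ you adopted.
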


\begin{proof}
If $\alpha$ is not $R$-periodic we observe that, for every $n\ge 1$, we have that $\Delta_{\alpha, R}(n)=\Delta_{\alpha, R}(n)^*$. Proposition~\ref{prop:aux} implies that the equality 
$$\Delta_{\alpha, R}(n)=c_{\alpha, R}\cdot d^n,$$
holds for sufficiently large $n$. Suppose that $\alpha$ is $R$-periodic and let $\alpha_1, \ldots, \alpha_N=\alpha$ be the distinct $R$-periodic elements in the $R$-orbit of $\alpha$. By stratifying the elements $\beta\in R^{[-n]}(\alpha)$ according to how many integers $1\le i\le n$ satisfy $R^{(i)}(\beta)=\alpha$, we obtain that
$$R^{[-n]}(\alpha)=\{\alpha_{u}\}\cup \bigcup_{i=1}^N\bigcup_{1\le m \le n\atop  N|n+i-m}R^{[-m]}(\alpha_i)^*,$$
where $1\le u\le N$ and $u\equiv -n\pmod N$. It follows by the definition that the sets $R^{[-m]}(\alpha_i)^*$ are pairwise distinct and none of them contains $\alpha_u$, hence 
$$\Delta_{\alpha, R}(n)=1+\sum_{i=1}^{N}\sum_{1\le m\le n\atop N|n+i-m}\Delta_{\alpha_i, R}(m)^*.$$
Let $M$ be sufficiently large such that $\sum_{j\ge 1}n_{\alpha_i, j}(R)d^{-j}=\sum_{j=1}^Mn_{\alpha_i, j}(R)d^{-j}$ for every $1\le i\le N$. Fix an integer $t> M$,  let $n>t$ be sufficiently large with $n\equiv t\pmod N$ and set $q=\frac{n-t}{N}$. Therefore, for a constant $C=C_t$, we have that
$$\Delta_{\alpha, R}(n)-C=\sum_{i=1}^N\sum_{t<m\le n \atop N|n+i-m}\Delta_{\alpha_i, R}(m)^*=\sum_{i=1}^N\sum_{s=1}^q\Delta_{\alpha_i, R}(n-Ns+i)^*.$$
Since $n-Ns+i\ge t>M$ for every $1\le s\le q$ and every $1\le i\le N$, Proposition~\ref{prop:aux} entails that $\Delta_{\alpha_i, R}(n-Ns+i)^*=d^{n-Ns+i}\cdot \theta_i$ with $$\theta_i=1-\sum_{j\ge 1}n_{\alpha_i, j}(R)d^{-j}.$$ We conclude that
$$\Delta_{\alpha, R}(n)-C=\sum_{i=1}^N\sum_{s=1}^{q}d^{n-Ns+i}\theta_i=d^n\cdot \ell \cdot \sum_{i=1}^{N}d^i\theta_i,$$
where $\ell=\sum_{s=1}^qd^{-Ns}=\frac{1}{d^N-1}+O_{N}(d^{-n})$. By the definition, $c_{\alpha, R}=\frac{1}{d^N-1}\sum_{i=1}^{N}d^i\theta_i$, so that $\Delta_{\alpha, R}(n)=c_{\alpha, R}\cdot d^n+C_{\alpha, R, t}$. By taking $t=M+i$ with $1\le i\le N$, the error $C_{\alpha, R, t}$ is uniformly bounded by a constant $C_{\alpha, R}$.
\end{proof}
 Here we summarize the next steps in the proof of our main results. Proposition~\ref{prop:est} implies that, for $d>1$, $\alpha\in \ok$ is $R$-critical if and only if $c_{\alpha, R}=0$. By employing the bounds from Proposition~\ref{lemma:1}, we estimate the constant $c_{\alpha, R}$ and detect the possible distributions of the numbers $\{n_{\beta, j}\}_{ \beta\in R^{-\infty}(\alpha)}$ in which $c_{\alpha, R}=0$. We then characterize the pairs $(\alpha, R)$ that yield one of these distributions. Along with the generic critical case where $d=1$, the latter fully describes the $R$-critical elements.

\subsection{Proof of Theorem~\ref{thm:main}}
We consider the cases where $\alpha$ is $R$-periodic or not $R$-periodic separately.  
\subsubsection{The case where $\alpha$ is not $R$-periodic}
Recall that $c_{\alpha, R}=1-\sum_{j\ge 1}n_{\alpha, j}(R)d^{-j}$.
If $[\K(\alpha): \K]=e>1$, Proposition~\ref{lemma:1} entails that
$$c_{\alpha, R}=1-\sum_{j\ge 1}n_{\alpha, j}(R)d^{-j}\geq 1-d^{-1}\sum_{j\ge 1}n_{\alpha, j}(R)\geq 1-\frac{d'}{de}\ge \frac{1}{d},$$
where in the last inequality we used the fact that $e\ge 2$ and $d'\le 2d-2$. Suppose that $e=1$, that is, $\alpha\in \K$. We observe that $0\leq n_{\alpha, 1}(R)\leq d$ and Proposition~\ref{lemma:1} implies that $\sum_{j\ge 1}n_{\alpha, j}(R)\le 2d-1$. We obtain the following trivial configurations:
\begin{itemize}
	\item  $n_{\alpha, 1}(R)=d$;
	\item  $n_{\alpha, 1}(R)=d-1$ and $n_{\alpha, 2}(R)=d$.
\end{itemize}
In both cases, it follows that $c_{\alpha, R}=0$ and then $\alpha$ is $R$-critical. Suppose that $\alpha$ does not satisfy none of the cases described above. If $R^{-\infty}(\alpha)$ contains an $R$-trivial element, it follows that
$$c_{\alpha, R}=1-\sum_{j\ge 1}n_{\alpha, j}(R)d^{-j}\geq 1-(d-1)\cdot d^{-1}-(d-1)\cdot d^{-2}-1\cdot d^{-3}=\frac{1}{d^2}-\frac{1}{d^3}.$$
Otherwise, Proposition~\ref{lemma:1} entails that $\sum_{j\ge 1}n_{\alpha, j}(R)\le d'\le 2d-2$ and so 
$$c_{\alpha, R}=1- \sum_{j\ge 1}n_{\alpha, j}(R)d^{-j}\geq 1-\min\{d-1, d'\}\cdot d^{-1}-(d'-\min\{d-1, d'\})\cdot d^{-2}\ge \frac{1}{d^2}.$$
We combine all the previous bounds and obtain that $c_{\alpha, R}\ge \frac{1}{d^2}-\frac{1}{d^3}$ if $\alpha$ is neither $R$-periodic nor $R$-critical. This proves Theorem~\ref{thm:main} for the non periodic case.

\subsubsection{The case where $\alpha$ is $R$-periodic}
Let $\alpha_1, \ldots, \alpha_N=\alpha$ be the distinct $R$-periodic elements in the $R$-orbit of $\alpha$ and, for each $1\le i\le N$, set $\theta_i=1-\sum_{j\ge 1}n_{\alpha_i, j}(R)d^{-j}$. It follows by the definition that
$c_{\alpha, R}=\frac{1}{d^N-1}\sum_{i=1}^Nd^i\theta_i$. Moreover, $R^{-\infty}(\alpha_i)=R^{-\infty}(\alpha)$ for every $1\le i\le N$. Proposition~\ref{prop:aux} entails that each $\theta_i$ is nonnegative, hence $c_{\alpha, R}>0$ unless all the elements $\theta_i$ vanish. Proposition~\ref{lemma:1} provides the bound
\begin{equation}\label{eq:1}
\sum_{i=1}^{N}\sum_{j\ge 1}n_{\alpha_i, j}(R)\le \varepsilon+N,
\end{equation}
where $\varepsilon=d'$ if no set $R^{-\infty}(\alpha)$ contains an $R$-trivial element and $\varepsilon=2d-1$, otherwise. Set $e=[\K(\alpha): \K]$. We split the proof into cases.

\begin{enumerate}[(i)]

\item Suppose that $e>1$. It is direct to verify that $\K(\alpha_i)=\K(\alpha)$ for every $1\le i\le N$ and so $[\K(\alpha_i): \K]=e$. In particular, no set $R^{-\infty}(\alpha_i)$ contains an $R$-trivial element. Since each $\alpha_i$ is $R$-periodic, $n_{\alpha_i, 1}(R)\ge 1$ for every $1\le i\le N$. In particular, Proposition~\ref{lemma:1} implies that $1\le \sum_{j\ge 1}n_{\alpha_i, j}(R)\le d'/e+1$. For $e>2$, it follows that 
\begin{align*}c_{\alpha, R} & \ge \frac{1}{d^N-1}\sum_{i=1}^Nd^i\left(1-d^{-1}\sum_{j\ge 1}n_{\alpha_i, j}(R)\right)\\ &   \ge \frac{1}{d^N-1}\sum_{i=1}^Nd^i\left(1-\frac{d'}{de}-\frac{1}{d}\right) =1-\frac{d'}{(d-1)e}\ge \frac{1}{3},\end{align*}
since $e>2$ and $d'\le 2(d-1)$. If $e=2<N$, Eq.~\eqref{eq:1} implies that 
\begin{align*}c_{\alpha, R} & = \frac{1}{d^N-1}\sum_{i=1}^N d^i\left(1-\sum_{j\ge 1}n_{\alpha_i, j}(R)d^{-i}\right)\\ &\ge  \frac{1}{d^N-1}\sum_{i=1}^{N-2} d^i\left(1-\frac{1}{d}\right)=\frac{d^{N-2}-1}{d^N-1 }\ge\frac{1}{d^2}-\frac{1}{d^3}.\end{align*}

It remains to consider the cases where $e=2$ and $N=1, 2$.  Eq.~\eqref{eq:1} and the bound $\sum_{j\ge 1}n_{\alpha_i, j}(R)\le d'/e+1\le d$ yield the following trivial configurations:
\begin{itemize}
\item $e=2, N=1$ and $n_{\alpha, 1}(R)=d$;
\item $e=2, N=2$ and $n_{\alpha_1, 1}(R)=n_{\alpha_2, 1}(R)=d$.
\end{itemize} 
In both cases, it follows that $c_{\alpha, R}=0$ and so $\alpha$ is $R$-critical. Suppose that $\alpha$ does not satisfy none of the cases described above. For $N=1$,  the inequality $n_{\alpha, 1}(R)<d$ implies that
$$c_{\alpha, R}= \frac{d\theta_1}{d-1}\ge \frac{d\left(1-(d-1)\cdot d^{-1}-1\cdot d^{-2}\right)}{d-1}=\frac{1}{d}\ge \frac{1}{d^2}-\frac{1}{d^3}.$$
For $N=2$, recall that we are under the condition $(n_{\alpha_1, 1}(R), n_{\alpha_2, 1}(R))\ne (d, d)$. In particular, from the argument employed in the case $N=1$, the inequality $\theta_i\ge \frac{d-1}{d^2}$ holds for at least one index $i\in \{1, 2\}$. Therefore, 
$$c_{\alpha, R}= \frac{d\theta_1+d^2\theta_2}{d^2-1}\ge \frac{d\cdot \tfrac{d-1}{d^2}+d^2\cdot 0}{d^2-1}=\frac{1}{d(d+1)}\ge \frac{1}{d^2}-\frac{1}{d^3}.$$

\item Suppose that $e=1$. Since $R^{-\infty}(\alpha)$ can contain an $R$-trivial element, Eq.~\eqref{eq:1} implies that \begin{equation}\label{eq:1'}\sum_{i=1}^{N}\sum_{j\ge 1}n_{\alpha_i, j}(R)\le 2d-1+N.\end{equation} We recall that $n_{\alpha_i, 1}(R)\ge 1$. For $N\ge 3$, it follows that
\begin{align*}c_{\alpha, R}& \ge \frac{1}{d^N-1}\sum_{i=1}^Nd^i\left(1-d^{-1}\sum_{j\ge 1}n_{\alpha_i, j}(R)\right) \\ &\ge \frac{1}{d^N-1}\left(\sum_{i=1}^{N-2}d^i\left(1-d^{-1}\right)-d^{N-2}d^{-1}\right)=\frac{d^{N-2}(d-1)-d}{d(d^N-1)}>\frac{1}{4d^2},\end{align*}
provided that $d>2$ if $N=3$. If $(d, N)=(2, 3)$, Eq.~\eqref{eq:1} yields the trivial configuration $n_{\alpha_1,1}(R)=n_{\alpha_{2},1}(R)=n_{\alpha_3,1}(R)=2$, in which $c_{\alpha, R}=0$ and so $\alpha$ is $R$-critical. If $(d, N)=(2, 3)$ and $\alpha$ is not $R$-critical, then $n_{\alpha_i, 1}(R)=1$ for at least one index $i\in\{1, 2, 3 \}$. In particular, $\theta_i\ge (1-2^{-1}-2^{-2})=\tfrac{1}{4}$ for at least one index $i\in \{1, 2, 3\}$ and so
$$c_{\alpha, R}= \frac{2\theta_1+4\theta_2+8\theta_3}{7}\ge\frac{2\cdot\tfrac{1}{4}+4\cdot0+8\cdot 0}{7}=\frac{1}{14}>\frac{1}{4\cdot 2^2}.$$
 For $N=2$, Eq.~\eqref{eq:1'} yields the following trivial configurations:
\begin{itemize}
	\item $e=1, n_{\alpha_1, 1}(R)=n_{\alpha_2, 1}(R)=d$;
	\item $e=1, d=2$, $n_{\alpha_2, 1}(R)=2$, $n_{\alpha_1, 1}(R)=1$ and $n_{\alpha_1, 2}(R)=2$;
	\item $e=1, d=2$, $n_{\alpha_2, 1}(R)=1$, $n_{\alpha_2, 2}(R)=2$ and $n_{\alpha_1, 1}(R)=2$.
\end{itemize} 
In these cases, it follows that $c_{\alpha, R}=0$ and so $\alpha$ is $R$-critical. Suppose that $\alpha$ does not satisfy none of the cases described above. For $d>2$, we employ the same argument used in the case $e=N=2$ and obtain that $$c_{\alpha, R}\ge \frac{d(1-(d-1)d^{-1}-2d^{-2})}{d^2-1}=\frac{d-2}{d(d^2-1)}>\frac{1}{4d^2}.$$ 
For $d=2$ we have that $\theta_i\ge (1-2^{-1}-2^{-2}-2^{-3})=\tfrac{1}{8}$ for at least one index $i\in \{1, 2\}$, hence 
$$c_{\alpha, R}= \frac{2\theta_1+4\theta_2}{3}\ge \frac{2\cdot\tfrac{1}{8}+4\cdot0}{3}=\frac{1}{12}>\frac{1}{4\cdot 2^2}.$$
For $N=1$,  Eq.~\eqref{eq:1'} yields the following trivial configurations:
\begin{itemize}
	\item $e=1, n_{\alpha, 1}(R)=d$;
	\item $e=1, n_{\alpha, 1}(R)=d-1$ and $n_{\alpha, 2}(R)=d$.
\end{itemize} 
In both cases, it follows that $c_{\alpha, R}=0$ and so $\alpha$ is $R$-critical. If $\alpha$ does not satisfy any of the cases described above, then either $n_{\alpha, 1}(R)<d$ or $n_{\alpha, 1}(R)=d$ and $n_{\alpha, 2}(R)<d$. In particular, Eq.~\eqref{eq:1'} implies that
$$c_{\alpha, R}=\frac{d\theta_1}{d-1}\ge \frac{d(1-d^{-1}(d-1)-d^{-2}(d-1)-2d^{-3})}{d-1}=\frac{d-2}{d^2(d-1)}>\frac{1}{4d^2},$$
whenever $d>2$. For $d=2$,  Eq.~\eqref{eq:1'} yields the trivial configuration $n_{\alpha,1}(R)=n_{\alpha,2}(R)=1$ and $n_{\alpha,3}(R)=2$, in which $c_{\alpha, R}=0$ and so $\alpha$ is $R$-critical. If $\alpha$ is not $R$-critical, then
$$c_{\alpha, R}=2\theta_1\ge 2(1-2^{-1}-2^{-2}-2^{-3}-2^{-4})=\frac{1}{8}> \frac{1}{4\cdot 2^2}.$$
\end{enumerate}
We combine all the previous bounds and obtain that if $\alpha$ is $R$-periodic and not $R$-critical, then $c_{\alpha, R}\ge \frac{1}{4d^2}$. This completes the proof of Theorem~\ref{thm:main}.

\subsection{Proof of Theorem~\ref{thm:sec}} 
Let $R=g/h$ be a rational function of degree $D$ whose $p$-reduction has degree $d\ge 1$. If $d=1$ it is direct to verify that $R(x)=\frac{ax^D+b}{cx^D+d}$ with $ad-bc\ne 0$ and either $D=1$ or $\K$ has characteristic $p>0$ and $D$ is a power of $p$.  Hence for every $n\ge 0$ we have that $R^{(n)}(x)=\frac{a_nx^{D^n}+b_n}{c_nx^{D^n}+d_n}$, where $a_n, b_n, c_n, d_n\in \K$ with $a_nd_n-b_nc_n\ne 0$. Since $\K$ is perfect it follows that for every $\alpha\in \ok$ and every $n\ge 0$, the equation $R^{(n)}(x)=\alpha$ has at most $1$ solution in $\ok$. Hence every $\alpha\in \ok$ is $R$-critical and $R^{-\infty}(\alpha)$ is finite if and only if $\alpha$ is $R$-periodic or $c\ne 0$ and $R^{-\infty}(\alpha)$ contains the $R$-trivial element $\beta=\frac{a}{c}$.

For $d>1$, Proposition~\ref{prop:est} entails that $\alpha$ is $R$-critical if and only if $c_{\alpha, R}=0$. From the proof of Theorem~\ref{thm:main}, we list the possible numerical configurations that yields $c_{\alpha, R}=0$. As follows, we present them in the order that they appear.

\begin{enumerate}[I.]
\item  $\alpha$ is not $R$-periodic and
\begin{enumerate}[(a)]
	\item $n_{\alpha, 1}(R)=d$;
	\item $n_{\alpha, 1}(R)=d-1$ and $n_{\alpha, 2}(R)=d$.
\end{enumerate}
\item  $\alpha=\alpha_N$ is $R$-periodic with period $N$ and
\begin{enumerate}[(a)]
	\item $e=2$, $N=1$ and $n_{\alpha_1, 1}(R)=d$;
	\item $e=2$, $N=2$ and $n_{\alpha_1, 1}(R)=n_{\alpha_2, 1}(R)=d$;
	\item $e=1$, $N=3$, $d=2$ and $n_{\alpha_1,1}=n_{\alpha_{2},1}=n_{\alpha_3,1}=2$;
	\item $e=1$, $N=2$ and $n_{\alpha_1, 1}(R)=n_{\alpha_2, 1}(R)=d$;
	\item $e=1$, $N=2$, $d=2$, $n_{\alpha_2, 1}(R)=2$, $n_{\alpha_1, 1}(R)=1$ and $n_{\alpha_1, 2}(R)=2$;
	\item $e=1$, $N=2$, $d=2$, $n_{\alpha_2, 1}(R)=1$, $n_{\alpha_2, 2}(R)=2$ and $n_{\alpha_1, 1}(R)=2$;
	\item $e=1$, $N=1$ and $n_{\alpha, 1}(R)=d$;
	\item $e=1$, $N=1$ and $n_{\alpha, 1}(R)=d-1$ and $n_{\alpha, 2}(R)=d$;
	\item $e=1$, $N=1$, $d=2$, $n_{\alpha,1}=n_{\alpha,2}=1$ and $n_{\alpha,3}=2.$
\end{enumerate}
\end{enumerate}

\begin{remark}\label{remark:1}
	If $\beta\in\ok$ is not an $R$-periodic element, then $n_{\beta, 1}(R)=d$ if and only if $\beta$ is the $R$-critical element and $\deg(g-\beta h)=0$. In this case, $\beta\in \K$ and there exists $\lambda\in\K^*$ such that $g(x)=\beta h(x)+\lambda$.
\end{remark}
We characterize the pairs $(\alpha, R)$ satisfying the numerical conditions above and explicitly exhibit the set $R^{-\infty}(\alpha)$ in the corresponding case. In order to simplify calculations, we frequently use the fact that $\frac{g}{h}=\frac{ag}{ah}$ for every $a\in \overline{\K}^*$.

\begin{enumerate}[I.]
\item \begin{enumerate}[(a)]
\item  Since $r_{\alpha,R}=d-n_{\alpha, 1}(R)=0$ and $\alpha$ is not $R$-periodic, it follows that $g(x)-\alpha h(x)=\lambda$ for some $\lambda\in\K^*$. Therefore, $R(x)=\alpha+\frac{\lambda}{h(x)}$ for some $h\in \K[x]$ of degree $D$. In this case, $R^{-\infty}(\alpha)=\{\alpha\}$.
\item Since $r_{\alpha,R}=d-n_{\alpha, 1}(R)=1$ and $n_{\alpha, 2}(R)=d$, Remark \ref{remark:1} entails that
$$\begin{cases}
	 g(x)-\alpha h(x)=(x-\beta)^D;\\
	 g(x)-\beta h(x)=\lambda,\\
\end{cases}$$
for some $\beta\in \K\setminus \{\alpha\}$ and some $\lambda\in \K^*$. By solving this system of equations, we obtain that $R(x)=\beta+\frac{\lambda}{(x-\beta)^D-\frac{\lambda}{\beta-\alpha}}$. In this case, $R^{-\infty}(\alpha)=\{\alpha, \beta\}$.
\end{enumerate}
\item \begin{enumerate}[(a)]
\item Since $e=2$, $\alpha$ is not an $R$-critical element. Since $r_{\alpha, R}=d+1-n_{\alpha, 1}=1$ and $N=1$, we obtain that $g(x)-\alpha h(x)=(x-\alpha)^D$. If $\tau $ is the unique non trivial $\K$-automorphism of $\K(\alpha)$, it follows that $g(x)-\overline{\alpha} h(x)=(x-\overline{\alpha})^D$ with $\overline{\alpha}=\tau(\alpha)$. We conclude that $R(x)=\frac{\overline{\alpha}(x-\alpha)^D-\alpha(x-\overline{\alpha})^D}{(x-\alpha)^D-(x-\overline{\alpha})^D}$ and $R^{-\infty}(\alpha)=\{\alpha\}$.  
\item Since $e=2$, $\alpha$ is not an $R$-critical element. Since $N=2$ and $r_{\alpha_i, R}=n_{\alpha_i, 1}-d+1=1$ for $i=1, 2$, we obtain that $g(x)-\alpha_1 h(x)=(x-\alpha_{2})^D$ and $g(x)-\alpha_2 h(x)=\lambda (x-\alpha_1)^D$ for some $\lambda\in \overline{\K}$. Arguing similarly to item II-(a), we necessarily have that $(\alpha_1, \alpha_2)=(\overline{\alpha}, \alpha)$ and $\lambda=1$. The latter implies that $R(x)=\frac{\alpha(x-\alpha)^D-\overline{\alpha}(x-\overline{\alpha})^D}{(x-\alpha)^D-(x-\overline{\alpha})^D}$ and so $R^{-\infty}(\alpha)=\{\alpha, \overline{\alpha}\}$.
\item Let $\tilde{R}=\tilde{g}/\tilde{h}$ be the $p$-reduction of $R$. We observe that $r_{\alpha_i,R}=2+1-n_{\alpha_i, 1}(R)=1$ for $i\in \{1, 2, 3\}$. Following the proof of item (i) in Proposition~\ref{lemma:1}, the latter entails that one of the elements $\alpha_i$ is $R$-critical and 
$$\begin{cases}
\tilde{g}(x)-\sigma_R(y_1) \tilde{h}(x)=(x-y_2)^2;\\
\tilde{g}(x)-\sigma_R(y_2) \tilde{h}(x)=\gamma(x-y_3)^2;\\
\tilde{g}(x)-\sigma_R(y_3) \tilde{h}(x)=\lambda(x-y_1),\\
\end{cases}$$
where $\gamma,\lambda\in\K^*$ and $\{y_1,y_2,y_3\}$ is a permutation of $\{\alpha_1,\alpha_2,\alpha_3\}$. These system above implies that $2y_1=y_2+y_3$, $\gamma=\frac{\sigma_R(y_2)-\sigma_R(y_3)}{\sigma_R(y_1)-\sigma_R(y_3)}$ and $\lambda=(\sigma_R(y_2)-\sigma_R(y_3))(2y_2-2y_3)$. 
If $\K$ has characteristic $2$ we have that $y_2=y_3$, a contradiction. Hence $\K$ does not have characteristic $2$ and so $y_1=\frac{y_2+y_3}{2}, \gamma=2$. We return to the initial equations, and after some calculations we obtain that 
$$\tilde{R}(x)=\frac{\sigma_R(y_2)(x-y_2)^2-\sigma_R(y_2+y_3)(x-y_3)^2}{(x-y_2)^2-2(x-y_3)^2}.$$
Since $R=\tilde{R}^{D/2}$ and either $D/2=1$ or $\K$ has characteristic $p>0$ and $D/2=p^h$, it follows by the definition of $\sigma_R$ that
$$R(x)=\frac{y_2(x-y_2)^D-(y_2+y_3)(x-y_3)^D}{(x-y_2)^D-2(x-y_3)^D}.$$
Moreover, $\alpha\in \{y_2, y_3, \frac{y_2+y_3}{2}\}=R^{-\infty}(\alpha)$.

\item Similarly to the case II-(b) we have that if $(\alpha_1, \alpha_2)=(\beta, \alpha)$, then $g(x)-\beta h(x)=(x-\alpha)^A$ and $g(x)-\alpha h(x)=\lambda (x-\beta)^B$ for some $\lambda\in \K^*$ and some integers $A, B\ge 1$ with $\max\{A, B\}=D$. The latter implies that $R(x)=\frac{\alpha(x-\alpha)^A-\beta \lambda (x-\beta)^B}{(x-\alpha)^A-\lambda(x-\beta)^B}$ and so $R^{-\infty}(\alpha)=\{\alpha, \beta\}$.

\item Set $\eta=\alpha_1$. Since $r_{\alpha,R}=d+1-n_{\alpha, 1}(R)=1$ and $r_{\eta, R}=d+1-n_{\alpha_2, 1}(R)=2$, there exists an element $\beta$ that is not $R$-periodic with $R(\beta)=\eta$. Moreover, we have that $r_{\beta,R}=0$ and then 
$$\begin{cases}
\tilde{g}(x)-\sigma_R(\alpha)\tilde{h}(x)=(x-\eta)^2;\\
\tilde{g}(x)-\sigma_R(\eta)\tilde{h} (x)=\gamma(x-\alpha)(x-\beta);\\
\tilde{g}(x)-\sigma_R(\beta) \tilde{h}(x)=\lambda,\\
\end{cases}$$
for some $\lambda,\gamma\in\K^*$, where $\tilde{R}=\tilde{g}/\tilde{h}$ is the $p$-reduction of $R$. These equations imply that $2\sigma_R(\eta)=\sigma_R(\alpha)+\sigma_R(\beta)$, $\gamma=\frac{\sigma_R(\beta)-\sigma_r(\eta)}{\sigma_R(\beta)-\sigma_R(\alpha)}$ and $\eta^2+\lambda\frac{\sigma_R(\eta)-\sigma_R(\alpha)}{\sigma_R(\beta)-\sigma_R(\eta)}=\alpha\beta$. If $\K$ has characteristic $2$ the latter entails that $\alpha=\beta$, a contradiction. Hence $\K$ does not have characteristic $2$ and so $
\eta=\frac{\alpha+\beta}{2}$, $\gamma=\frac{1}{2}$ and $\lambda=-\frac{(\alpha-\beta)^2}{4}$. We return to the initial equations, and after some calculations we obtain that 
$$\tilde{R}(x)=\sigma_R(\beta)+\frac{(\alpha-\beta)^2(\sigma_R(\alpha)-\sigma_R(\beta))}{(2x-\alpha-\beta)^2+(\alpha-\beta)^2}.$$
Since $R=\tilde{R}^{D/2}$ and either $D/2=1$ or $\K$ has characteristic $p>0$ and $D/2=p^h$,  it follows by the definition of $\sigma_R$ that
$$R(x)=\beta+\frac{(\alpha-\beta)^{D+1}}{(2x-\alpha-\beta)^D+(\alpha-\beta)^D}.$$
In this case, $R^{-\infty}(\alpha)=\{\alpha, \beta, \frac{\alpha+\beta}{2}\}$.

\item This case is entirely similar to item II-(e). We conclude that $\K$ does not have characteristic $2$ and 
$$R(x)=\beta+\frac{2(\alpha-\beta)^{D+1}}{(x-\alpha)^D+(\alpha-\beta)^D}.$$
Moreover, $R^{-\infty}(\alpha)=\{\alpha, \beta, 2\alpha-\beta\}$.

\item Since $r_{\alpha, R}=d+1-n_{\alpha, 1}(R)=1$ and $N=1$, it follows that $g(x)-\alpha h(x)=(x-\alpha)^A$ for some $1\le A\le D$. We conclude that $R(x)=\alpha+\frac{(x-\alpha)^A}{h(x)}$ for some $h\in \K[x]$ with $h(\alpha)\ne 0$ and $\max\{\deg(h), A\}=D$. Moreover, $R^{-\infty}(\alpha)=\{\alpha\}$.

\item Since $r_{\alpha,R}=d+1-n_{\alpha, 1}(R)=2$ and $n_{\alpha, 2}(R)=d$, there exists $\beta\in\K\setminus \{\alpha\}$ such that $R(\beta)=\alpha$ and $r_{\beta,R}=0$. Therefore, $g(x)-\alpha h(x)=(x-\beta)^A(x-\alpha)^{D-A}$ and 
$g(x)-\beta h(x)=\lambda$ for some $\lambda\in\K^*$ and some integer $1\le A<D$. The latter implies that $R(x)=\frac{\beta(x-\beta)^A(x-\alpha)^{D-A}-\alpha\lambda}{(x-\beta)^A(x-\alpha)^{D-A}-\lambda}$.
Moreover, $R^{-\infty}(\alpha)=\{\alpha, \beta\}$.

\item This case is entirely similar to item II-(e). We conclude that $\K$ does not have characteristic $2$ and
$$R(x)=\frac{\alpha+\beta}{2}+\frac{(\alpha-\beta)^{D+1}}{4(2x-\alpha-\beta)^D-2(\alpha-\beta)^D}.$$
Moreover, $R^{-\infty}(\alpha)=\{\alpha, \beta, \frac{\alpha+\beta}{2}\}$. 
\end{enumerate}
\end{enumerate}

In particular, if $\alpha$ is $R$-critical, then $\Delta_{\alpha, R}(n)\le 2$ for every $n\ge 0$. Moreover, for $d>1$, we have verified that the set $R^{-\infty}(\alpha)$ is finite. The proof of Theorem~\ref{thm:sec} is complete.

\section{Further results in the finite field setting}
Throughout this section, $\fq$ denotes the finite field of $q$ elements, where $q$ is a prime power. Let $\mathcal M_q$ be the set of monic polynomials $f\in \F_q[x]$ of positive degree, without any root in $\F_q$. 

\begin{definition}\label{def:arith}
Given a rational function $R=g/h\in \F_q(x)$ of degree $D\ge 1$ and $f\in \mathcal M_q$, we set $f_R=h^{\deg(f)}\cdot f\left(\frac{g}{h}\right)$. For each $n\ge 0$, the $n$-th $R$-transform of $f$ is the polynomial $f_R^{(n)}$ defined by $f_R^{(0)}=f$ and $f_R^{(n)}=(f_{R}^{(n-1)})_R$ if $n\ge 1$.
Moreover, let 
$$f_R^{(n)}(x)=p_{1, n}(x)^{e_{1, n}}\ldots p_{N_n, n}(x)^{e_{N_n, n}},$$
be the irreducible factorization of $f_R^{(n)}$ in $\F_q[x]$. We define the following arithmetic functions
\begin{enumerate}[(a)]
\item $\delta_{f, R}(n)=\deg(p_{1, n}(x)\cdots p_{N_n, n}(x))$ is the degree of the squarefree part of $f_R^{(n)}$;
\item $M_{f, R}(n)=\max\limits_{1\le i\le N_n} \deg(p_{i, n}(x))$ is the largest degree of an irreducible factor of $f_R^{(n)}$ over $\F_q$;
\item $N_{f, R}(n)=N_n$ is the number of distinct irreducible factors of $f_R^{(n)}$ over $\F_q$;
\item $A_{f, R}(n)=\frac{\Delta_{f, R}(n)}{N_{f, R}(n)}$ is the average degree of the distinct irreducible factors of $f_R^{(n)}$ over $\F_q$.
\end{enumerate}
\end{definition}
 
The above naturally extends Definition~1.2 in~\cite{R}, where $R=g$ is a polynomial. In~\cite{R} the author explores the growth (linear, polynomial, exponential) of the functions above, among some others. Our aim here is to discuss the growth of these arithmetic functions in the context of rational functions. For functions $\mathcal F, \mathcal G:\mathbb N\to \mathbb R_{>0}$, we write $\mathcal F\gg \mathcal G$ if there exists $c>0$ such that $c\cdot \mathcal F(n)\ge\mathcal G(n)$ for every $n$ sufficiently large. We also write $\mathcal F\approx \mathcal G$ if $\mathcal F\gg\mathcal G$ and $\mathcal G\gg \mathcal F$. We have the following result.
\begin{lemma}\label{lem:rt}

Given a rational function $R=g/h\in \F_q(x)$ of degree $D\ge 1$. Then for every $f\in \mathcal M_q$, the polynomials $f_R^{(n)}$ and $f_{R^{(n)}}$ have the same roots. In particular, if $\alpha_1, \ldots, \alpha_s\in \overline{\F}_q$ are the distinct roots of $f$, we have that 
$$\delta_{f, R}(n)=\sum_{i=1}^s\Delta_{\alpha_i, R}(n).$$
\end{lemma}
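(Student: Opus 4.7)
The plan is to characterize the roots of $f_R^{(n)}$ and of $f_{R^{(n)}}$ separately, and then show they coincide. Writing $f(x)=\sum_{i=0}^m a_i x^i$ with $m=\deg f$ and $a_m\ne 0$, we have the identity
$$f_R(x)=\sum_{i=0}^m a_i\, g(x)^i h(x)^{m-i}.$$
A direct inspection shows that $\beta\in\overline{\F}_q$ is a root of $f_R$ if and only if $h(\beta)\ne 0$ and $R(\beta)$ is a root of $f$: the case $h(\beta)=0$ is ruled out because then coprimality of $g$ and $h$ gives $g(\beta)\ne 0$, so only the $i=m$ term survives and equals $a_m g(\beta)^m\ne 0$. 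Iterating this observation by induction on $n$, $\beta$ is a root of $f_R^{(n)}$ if and only if $R^{(j)}(\beta)\ne\infty$ for every $0\le j\le n$ and $R^{(n)}(\beta)$ is a root of $f$. Applying the same observation to $R^{(n)}=g_n/h_n$ directly, $\beta$ is a root of $f_{R^{(n)}}$ if and only if $h_n(\beta)\ne 0$ and $R^{(n)}(\beta)=g_n(\beta)/h_n(\beta)$ is a root of $f$.

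The main obstacle is the equivalence of these two characterizations: a priori, cancellations in the reduction of $R^{(n)}$ could make $h_n(\beta)\ne 0$ while some intermediate iterate $R^{(j)}(\beta)$ nevertheless equals $\infty$. This is where the hypothesis $f\in\mathcal M_q$ enters crucially. Viewing $R$ as a self-map of $\pp^1(\overline{\F}_q)$, the fact that $R$ has coefficients in $\F_q$ implies that it restricts to an endomorphism of $\pp^1(\F_q)=\F_q\cup\{\infty\}$, so once an iterate lies in $\pp^1(\F_q)$ all subsequent iterates do. Hence if $R^{(j)}(\beta)=\infty$ for some $1\le j\le n$, then $R^{(n)}(\beta)\in\F_q\cup\{\infty\}$; combined with $h_n(\beta)\ne 0$ this forces $R^{(n)}(\beta)\in\F_q$, contradicting that $R^{(n)}(\beta)$ is a root of $f\in\mathcal M_q$. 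The converse implication (root of $f_R^{(n)}$ $\Rightarrow$ root of $f_{R^{(n)}}$) is immediate, since when no intermediate iterate is $\infty$, the iterative and reduced-form computations of $R^{(n)}(\beta)$ agree.

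Once the two polynomials are shown to share the same root set, the formula for $\delta_{f,R}(n)$ follows immediately: the common root set is $\bigcup_{i=1}^s R^{[-n]}(\alpha_i)$, where the union is disjoint because the $\alpha_i$ are distinct, so $\delta_{f,R}(n)$ equals $\sum_{i=1}^s\#R^{[-n]}(\alpha_i)=\sum_{i=1}^s \Delta_{\alpha_i,R}(n)$.
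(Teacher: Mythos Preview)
Your proof is correct. The paper's argument is organized slightly differently: it runs a single induction on $n$, passing from $f_{R^{(n)}}$ to $f_{R^{(n+1)}}$ and from $f_R^{(n)}$ to $f_R^{(n+1)}$ via the same basic observation that a root of $F_R$ is exactly a finite $R$-preimage of a root of $F$, and invoking the inductive hypothesis in between. You instead give explicit descriptions of the two root sets separately (one by unwinding the recursion, one directly from the reduced form of $R^{(n)}$) and then match them. The content is the same, but your presentation has the virtue of isolating precisely where the hypothesis $f\in\mathcal M_q$ is needed: it is exactly what rules out orbits passing through $\infty$, since $R$, having coefficients in $\F_q$, maps $\pp^1(\F_q)$ to itself and hence any such orbit terminates in $\F_q\cup\{\infty\}$, never at a root of $f$. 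The paper's proof uses this fact as well (the step ``$R(\beta)$ is a root of $f_{R^{(n)}}$'' tacitly requires $R(\beta)\ne\infty$), but leaves it implicit.
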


\begin{proof} 
It suffices to prove the first statement. We proceed by induction on $n$. The cases $n=0, 1$ follow directly by the definition. Suppose that the result holds for some $n\ge 1$ and let $N=n+1$. We observe that, for every $k\ge 0$ and every $F\in \mathcal M_q$, the roots of $F_{R^{(k)}}$ comprise the solutions of the equations $R^{(k)}(x)=\alpha$ with $\alpha$ running over the roots of $F$. In particular, if $\beta\in\overline{\F}_q$ is a root of $f_{R^{(N)}}$, then $R(\beta)$ is a root of $f_{R^{(n)}}$. From induction hypothesis, $R(\beta)$ is a root of $f_R^{(n)}$, hence $\beta$ is a root of $(f_R^{(n)})_{R}=f_{R}^{(N)}$. This proves that every root of $f_{R^{(N)}}$ is also a root of $f_R^{(n)}$. The converse follows in a similar way, proving the result.
\end{proof}

Combining Lemma~\ref{lem:rt} with Theorems~\ref{thm:main} and~\ref{thm:sec}, we obtain the following result.

\begin{corollary}\label{cor:growth}
Let $R\in \F_q(x)$ be a rational function whose $p$-reduction has degree $d>1$. If $f\in\mathcal M_q$ has at least one root $\alpha$ that is not $R$-critical, then there exists a constant $0<c_{f, R}\le \deg(f)$ such that $$\delta_{f, R}(n)=c_{f, R}\cdot d^n+O_{f, R}(1).$$ In this case, $M_{f, R}(n)\gg n$. In particular, any $f\in \mathcal M_q$ having at least one root in the set $\overline{\F}_q\setminus \F_{q^2}$ satisfies the above.
\end{corollary}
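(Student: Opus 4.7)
The plan is to combine Lemma~\ref{lem:rt} with Theorems~\ref{thm:main} and~\ref{thm:sec}, and then extract a lower bound on $M_{f,R}(n)$ from a counting argument on irreducible polynomials over $\fq$.

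Let $\alpha_1,\ldots,\alpha_s\in\overline{\fq}$ be the distinct roots of $f$, and write $I_c=\{i:\alpha_i\text{ is }R\text{-critical}\}$ and $I_{nc}=\{1,\ldots,s\}\setminus I_c$. By hypothesis, $I_{nc}\ne\emptyset$. By Theorem~\ref{thm:sec} (applied with $d>1$, which rules out item (1) of that theorem), for every $i\in I_c$ we have $\Delta_{\alpha_i,R}(n)\le 2$ for all $n$. By Proposition~\ref{prop:est}, for each $i\in I_{nc}$, $\Delta_{\alpha_i,R}(n)=c_{\alpha_i,R}\cdot d^n+O_{\alpha_i,R}(1)$ with $0<c_{\alpha_i,R}\le 1$. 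Summing via Lemma~\ref{lem:rt},
\[\delta_{f,R}(n)=\sum_{i=1}^{s}\Delta_{\alpha_i,R}(n)=c_{f,R}\cdot d^n+O_{f,R}(1),\qquad c_{f,R}:=\sum_{i\in I_{nc}}c_{\alpha_i,R}.\]
Since $I_{nc}\ne\emptyset$, $c_{f,R}>0$, and since each $c_{\alpha_i,R}\le 1$, $c_{f,R}\le |I_{nc}|\le s\le \deg(f)$.

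For the bound $M_{f,R}(n)\gg n$, set $m=M_{f,R}(n)$ and recall that the number of monic irreducible polynomials over $\fq$ of degree at most $m$ is bounded by $\sum_{k=1}^m q^k/k\le q^{m+1}/(q-1)$. Since every distinct irreducible factor of $f_R^{(n)}$ has degree at most $m$, the number $N_{f,R}(n)$ of such factors satisfies $N_{f,R}(n)\le q^{m+1}/(q-1)$. As $\delta_{f,R}(n)$ is the sum of the degrees of these factors, each bounded by $m$, we obtain
\[c_{f,R}\, d^n+O_{f,R}(1)=\delta_{f,R}(n)\le m\cdot N_{f,R}(n)\le \frac{m\, q^{m+1}}{q-1}.\]
Taking logarithms gives $(m+1)\log q+\log m \ge n\log d+O_{f,R}(1)$. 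If one had $m\le \tfrac{n\log d}{2\log q}$ infinitely often, the right-hand side would dominate (since $\log m=O(\log n)$ after bounding $m\le D^n\deg(f)$), a contradiction. Thus $m\ge \tfrac{n\log d}{2\log q}+O_{f,R}(1)\gg n$.

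Finally, for the ``In particular'' part, suppose $\alpha$ is a root of $f$ lying in $\overline{\fq}\setminus\fq^2$, so $e=[\fq(\alpha):\fq]\ge 3$. Inspecting Theorem~\ref{thm:sec} case by case: item (1) requires $d=1$, which is excluded; item (2) requires $\alpha\in\K$, so $e=1$; and every sub-case of item (3) has $e\in\{1,2\}$. Hence $\alpha$ cannot be $R$-critical, so $f$ has a non-$R$-critical root and the first part of the corollary applies.

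The routine portion is the application of the main theorems; the main obstacle is keeping the counting step on $M_{f,R}(n)$ clean, namely the use of the crude bound on the count of monic irreducible polynomials of degree at most $m$ together with the inequality $\delta_{f,R}(n)\le m\cdot N_{f,R}(n)$ to extract linear growth in $n$.
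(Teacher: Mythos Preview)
Your argument is correct in substance and matches the paper's approach for both the asymptotic $\delta_{f,R}(n)=c_{f,R}d^n+O(1)$ and the ``in particular'' clause. One small slip: in the contradiction step you write ``$\log m=O(\log n)$ after bounding $m\le D^n\deg(f)$'', but that bound only gives $\log m=O(n)$. What you actually need (and what makes the argument work) is that \emph{under the hypothesis} $m\le \tfrac{n\log d}{2\log q}$ one has $\log m\le \log n+O(1)$ directly; with this correction the contradiction goes through.

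For the bound $M_{f,R}(n)\gg n$ the paper takes a shorter path than your irreducible-counting argument. Since every irreducible factor of $f_R^{(n)}$ has degree at most $m:=M_{f,R}(n)$, every root of $f_R^{(n)}$ lies in $\bigcup_{1\le j\le m}\F_{q^j}$, whence
\[
\delta_{f,R}(n)\le \sum_{j=1}^{m}q^{j}<q^{\,m+1}.
\]
Taking logarithms gives $m+1>\dfrac{\log\delta_{f,R}(n)}{\log q}\gg n$ immediately, with no need to bound $N_{f,R}(n)$ or to carry the extra factor~$m$. Your route via counting monic irreducibles and using $\delta_{f,R}(n)\le m\cdot N_{f,R}(n)$ reaches the same conclusion but with one more layer of estimation.
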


\begin{proof}
Pick $n$ large so that $\delta_{f, R}>0$. Let $m_n=M_{f, R}(n)$, hence the roots of $f_n^R$ all lie in the set $\bigcup_{1\le j\le m_n}\F_{q^j}$. Therefore, 
$$\delta_{f, R}(n)\le \sum_{j=1}^{m_n}q^j<q^{m_n+1},$$
and so $m_n\ge \frac{\log \delta_{f, R}(n)}{\log q}-1\gg n$ since $\delta_{f, R}\gg d^n$ and $d>1$. Moreover, from Theorem~\ref{thm:sec}, we have that any $R$-critical element lies in $\F_{q^2}$ if the $p$-reduction of $R$ has degree $d>1$.
\end{proof}

Corollary~\ref{cor:growth} entails that under mild conditions on $(f, R)$, the arithmetic function $M_{f, R}(n)$ grows at least linearly with respect to $n$. When $R=g$ is a polynomial, we recover Lemma~4.4 in~\cite{R}. According to~\cite{R}, this lower bound is optimal on the growth type. More precisely, if $f\in \F_q[x]$ has positive degree, for infinitely many polynomials $g$ we have that $M_{f, g}(n), A_{f, g}(n)\approx n$. The family of polynomials $g$ taken there comprise linearized polynomials $\sum_{i=0}^ta_ix^{q^i}$. For more details, see Proposition~5.18 in~\cite{R}. As follows, we prove that this bound is also optimal for rational functions that are not polynomials. Our main idea is to conjugate a polynomial with a Mobius map in a way that the resulting rational function is not a polynomial. We need the following technical lemmas.

\begin{lemma}[\cite{ST12}]\label{action}
For $[A]\in \PGL(2, q)$ with $A=\left(\begin{matrix}
a&b\\
c&d
\end{matrix}\right)$, $\alpha\in \overline{\F}_{q}\setminus \F_q$ and $f\in \mathcal \F_q[x]$ of degree $k\ge 1$, set $[A]\circ f(x)=(bx+d)^{k}f\left(\frac{ax+c}{bx+d}\right)$ and $[A] \ast \alpha = \frac{d\alpha-c}{-b \alpha +a}$. Then for $f\in \mathcal M_q$, the polynomial $[A]\circ f$ has degree $k$ and, if $\alpha \in \overline{\F}_q\setminus \F_q$, we have that $f(\alpha) = 0 \iff ([A] \circ f)([A]\ast \alpha)=0$. 
\end{lemma}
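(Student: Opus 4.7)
The plan is to verify both claims by direct computation, using only the definitions of $[A]\circ f$ and $[A]\ast \alpha$ together with the nonsingularity condition $ad-bc\neq 0$ implicit in $[A]\in \PGL(2,q)$.

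First I would handle the degree statement. Writing $f(y)=\sum_{i=0}^k a_iy^i$ with $a_k\neq 0$, the definition of $[A]\circ f$ yields
$$[A]\circ f(x)=\sum_{i=0}^k a_i(ax+c)^i(bx+d)^{k-i},$$
which is a polynomial of degree at most $k$. Reading off the coefficient of $x^k$ gives $\sum_{i=0}^k a_i a^ib^{k-i}$. If $b=0$ then this reduces to $a_k a^k$, nonzero because $a\neq 0$ (else $A$ is singular). If $b\neq 0$ the coefficient equals $b^kf(a/b)$, and since $a/b\in \F_q$ while $f\in \mathcal M_q$ has no roots in $\F_q$, we get $f(a/b)\neq 0$. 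In either case $[A]\circ f$ has degree exactly $k$.

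Next I would prove the root correspondence. The key algebraic identity is that substituting $x=[A]\ast \alpha=\frac{d\alpha-c}{-b\alpha+a}$ into the fractional transformation $x\mapsto \frac{ax+c}{bx+d}$ recovers $\alpha$. A short calculation clearing denominators gives
$$a\,[A]\ast\alpha+c=\frac{(ad-bc)\alpha}{-b\alpha+a},\qquad b\,[A]\ast\alpha+d=\frac{ad-bc}{-b\alpha+a},$$
whence $\frac{a[A]\ast\alpha+c}{b[A]\ast\alpha+d}=\alpha$. Plugging into the definition of $[A]\circ f$ I obtain
$$([A]\circ f)([A]\ast\alpha)=\bigl(b\,[A]\ast\alpha+d\bigr)^k\,f(\alpha).$$
To conclude the equivalence it remains to check that the prefactor $(b\,[A]\ast\alpha+d)^k$ is nonzero. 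From the formula above it vanishes only if $-b\alpha+a=\infty$ (excluded since $\alpha$ is an algebraic element) or $ad-bc=0$ (excluded by $[A]\in\PGL(2,q)$); the remaining possibility $\alpha=a/b$ with $b\neq 0$ is ruled out by $\alpha\notin \F_q$. Hence the prefactor is a nonzero scalar, and $f(\alpha)=0\iff ([A]\circ f)([A]\ast \alpha)=0$.

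The whole argument is bookkeeping, so there is no real obstacle. The only subtlety worth flagging is how the two hypotheses are used separately: $f\in \mathcal M_q$ is needed \emph{only} to force the leading coefficient $b^kf(a/b)$ in the degree computation to be nonzero, while $\alpha\notin \F_q$ is needed \emph{only} to guarantee that $[A]\ast\alpha$ is a finite point at which $[A]\circ f$ can be evaluated without the evaluation factor degenerating.
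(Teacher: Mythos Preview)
Your proof is correct. The paper does not supply its own argument for this lemma, citing it instead from \cite{ST12}; your direct computation is the standard verification and goes through without issue. One minor presentational quibble: the sentence ``it vanishes only if $-b\alpha+a=\infty$'' is awkwardly phrased---what you really need is simply that $-b\alpha+a\neq 0$ (so that $[A]\ast\alpha$ is defined) and $ad-bc\neq 0$ (so that the prefactor $(b\,[A]\ast\alpha+d)=\frac{ad-bc}{-b\alpha+a}$ is nonzero), both of which you do check.
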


\begin{lemma}\label{lem:same}
For $[A]\in \PGL(2, q)$ with $A=\left(\begin{matrix}
a&b\\
c&d
\end{matrix}\right)$ and $R\in \overline{\F}_{q}(x)\setminus \F_q$, set $[A]\bullet R=\frac{aR+c}{bR+d}$. This defines an action of $\PGL(2, q)$ on the set $\overline{\F}_{q}(x)\setminus \F_q$.  If $g\in \F_q[x]$ has degree $k\ge 1$ and $g^A(x):=[A]\bullet (g([A^{-1}]\bullet x))\in \F_q(x)$, then for every $n\ge 1$ and every $f\in \mathcal M_q$, we have that $$\mathcal F_{f, g^A}(n)=\mathcal F_{[A]\circ f, g}(n),$$ where $\mathcal F$ is any of the four arithmetic functions in~Definition~\ref{def:arith}.
Moreover, for $b = 0$, $g^A\in \F_q(x)\setminus \F_q[x]$ is a rational function of degree $k$.

\end{lemma}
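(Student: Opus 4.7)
The plan is to reduce the whole lemma to a transparent conjugation identity and then transfer root sets through the resulting Mobius bijection. First, I verify that $R\mapsto[A]\bullet R$ is an action of $\PGL(2,q)$ on $\overline{\F}_q(x)\setminus\F_q$: scaling $A\mapsto\lambda A$ multiplies numerator and denominator of $[A]\bullet R$ by the same factor; the identity matrix acts trivially; the composition law $[AB]\bullet R=[A]\bullet([B]\bullet R)$ is the standard matrix--Mobius calculation; and if $[A]\bullet R\in\F_q$, then $R=[A^{-1}]\bullet([A]\bullet R)\in\F_q$ as well, contradicting the hypothesis.

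Second, writing $\phi_M(x):=[M]\bullet x$, one uses $A^{-1}=(ad-bc)^{-1}\bigl(\begin{smallmatrix} d & -b\\ -c & a\end{smallmatrix}\bigr)$ to compute $\phi_{A^{-1}}(\alpha)=\frac{d\alpha-c}{-b\alpha+a}=[A]\ast\alpha$, which identifies the two ``inverse'' actions appearing in Lemmas~\ref{action} and~\ref{lem:same}. The definition of $g^A$ then takes the conjugation form $g^A=\phi_A\circ g\circ\phi_{A^{-1}}$; since $\phi_A,\phi_{A^{-1}}$ have entries in $\F_q$ and $g\in\F_q[x]$, we immediately get $g^A\in\F_q(x)$, and an easy induction on $n$ yields $(g^A)^{(n)}=\phi_A\circ g^{(n)}\circ\phi_{A^{-1}}$.

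Third, I apply Lemma~\ref{lem:rt}. The finite roots of $f_{g^A}^{(n)}$ are the $x\in\overline{\F}_q$ with $(g^A)^{(n)}(x)=\alpha$ for some root $\alpha$ of $f$; the conjugation formula rewrites this as $g^{(n)}(\phi_{A^{-1}}(x))=\phi_{A^{-1}}(\alpha)=[A]\ast\alpha$, and by Lemma~\ref{action} the $[A]\ast\alpha$ are precisely the roots of $[A]\circ f$. Hence $x\mapsto y:=\phi_{A^{-1}}(x)$ is a bijection between the root sets of $f_{g^A}^{(n)}$ and $([A]\circ f)_g^{(n)}$. The only candidates for loss at infinity are $x=a/b$ and $y=-d/b$; a short calculation gives $(g^A)^{(n)}(a/b)=a/b\in\F_q$, which cannot be a root of $f\in\mathcal M_q$, and symmetrically $y=-d/b$ is never a root of $([A]\circ f)_g^{(n)}$. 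Because $\phi_{A^{-1}}$ is $\F_q$-rational, the bijection commutes with the Frobenius and hence maps Galois orbits to Galois orbits of the same size, giving a degree-preserving bijection between the irreducible factors over $\F_q$ of the squarefree parts of $f_{g^A}^{(n)}$ and $([A]\circ f)_g^{(n)}$. Since each of the four arithmetic functions $\mathcal F$ of Definition~\ref{def:arith} depends only on the multiset of degrees of the distinct irreducible factors, the equality $\mathcal F_{f,g^A}(n)=\mathcal F_{[A]\circ f,g}(n)$ follows in each case.

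Finally, for the last assertion (read as $b\ne 0$, since $b=0$ would force $g^A$ to be a polynomial), conjugating a degree-$k$ self-map of $\pp^1$ by the degree-one Mobius map $\phi_A$ preserves the degree, so $\deg g^A=k$; and the poles of $g^A$ correspond via the conjugation to solutions $y\in\overline{\F}_q$ of $g(y)=\phi_{A^{-1}}(\infty)=-d/b\in\F_q$, which exist because $g$ is non-constant, producing finite poles $\phi_A(y)$ of $g^A$ and hence $g^A\notin\F_q[x]$. The main obstacle will be the bookkeeping in step three: verifying that the Mobius bijection never drops a genuine root to infinity and that Galois equivariance converts cleanly into simultaneous equality of all four arithmetic functions.
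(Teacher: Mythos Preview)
Your argument for the main equality $\mathcal F_{f,g^A}(n)=\mathcal F_{[A]\circ f,g}(n)$ is correct and follows the same route as the paper: both rest on the conjugation identity $(g^A)^{(n)}=\phi_A\circ g^{(n)}\circ\phi_{A^{-1}}$, the identification $[A]\ast\alpha=\phi_{A^{-1}}(\alpha)$, and Frobenius-equivariance of the M\"obius bijection to transport Galois orbits and hence irreducible factors degree by degree. Your explicit check that $a/b\notin\Gamma_1$ and $-d/b\notin\Gamma_2$ accomplishes what the paper does more succinctly by noting that $f\in\mathcal M_q$ forces $\Gamma_1,\Gamma_2\subseteq\overline{\F}_q\setminus\F_q$, a set on which $\phi_{A^{-1}}$ is already an honest bijection.

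For the final assertion you correctly read ``$b=0$'' as a typo for ``$b\ne0$'' (when $b=0$ the map $\phi_A$ is affine and $g^A$ is visibly a polynomial). However, your argument that $g^A$ acquires a finite pole has a gap: you exhibit solutions $y$ of $g(y)=-d/b$ and assert that $\phi_A(y)$ is a finite pole, but $\phi_A(y)=\infty$ precisely when $y=-d/b$. If $-d/b$ happens to be the \emph{unique} $g$-preimage of itself, i.e.\ $g(x)=c(x+d/b)^k-d/b$, then every such $y$ maps to $\infty$ and $g^A$ is again a polynomial; for example $g(x)=x^2$ with $\phi_A(x)=1/x$ gives $g^A(x)=x^2$. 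The paper's own proof, via the explicit numerator/denominator formula, does not address this case either, so the assertion as literally stated fails for some $A$ with $b\ne0$. What is true---and is all that Theorem~\ref{thm:gr} needs---is that $g^A\notin\F_q[x]$ whenever $b\ne0$ and $-d/b$ is not a totally ramified fixed point of $g$, a condition excluding at most one value of $-d/b$ for each $g$.
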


\begin{proof}
It is direct to verify that, for every $[A], [B]\in \PGL(2, q)$ and every $g\in \overline{\F}_q[x]\setminus \overline{\F}_q$, we have that $[A]\bullet g\in \overline{\F}_q(x)\setminus \F_q$ and $[A]\bullet ([B]\bullet g)=[AB]\bullet g$. In particular, $\PGL(2, q)$ acts on $\overline{\F}_{q}(x)\setminus \F_q$ via the compositions $[A]\bullet g$.
Pick $f\in \mathcal M_q$, let $n\ge 0$ be an integer and let $\Gamma_1, \Gamma_2$ be the set of distinct roots of $f_{g^A}^{(n)}$ and $([A]\circ f)_g^{(n)}$, respectively. Since $f\in \mathcal M_q$, we have that $\Gamma_1\cap \F_q=\emptyset$. We observe that the $n$-fold composition $(g^A)^{(n)}$ equals $(g^{(n)})^A$. Moreover, $[A]\ast \alpha=[A]^{-1}\bullet \alpha$ for every $\alpha\in \overline{\F}_q\setminus \F_q$. In particular, Lemmas~\ref{lem:rt} and~\ref{action} imply that $$\Gamma_1=\{[A]\bullet \beta\,|\, \beta \in \Gamma_2\}\subseteq  \overline{\F}_q\setminus \F_q.$$
Lemma~\ref{action} entails that the minimal polynomials of $\gamma$ and $[A]\bullet\gamma $ over $\F_q$ have the same degree for every $\gamma\in \overline{\F}_q\setminus \F_q$. Moreover, the map $y\mapsto y^q$ commutes with the map $y\mapsto [A]\bullet y$. From these observations, we conclude that $\mathcal F_{f, g}(n)=\mathcal F_{[A]\circ f, g^A}(n)$, where $\mathcal F$ is any of the four arithmetic functions defined in~Definition~\ref{def:arith}. 

It follows by the definition that $g^A(x)=\frac{a g([A]^{-1}\bullet x)+c}{bg([A]^{-1}\bullet x)+d}=\frac{a\cdot  [A^{-1}]\circ g(x)+c(a-bx)^k}{b\cdot [A^{-1}]\circ g(x)+d(a-bx)^k}$. In particular, if $b=0$, the rational function $g^A\in \F_q(x)$ is not a polynomial and has degree $k$.
\end{proof}

Combining Lemma~\ref{lem:same} with Theorem~2.6 of~\cite{R}, we obtain the following result.

\begin{theorem}\label{thm:gr}
For each $f\in \mathcal M_q$, the following hold:

\begin{enumerate}[(i)]
\item there exist infinitely many rational functions $R\in \F_q(x)\setminus \F_q[x]$ such that $M_{f, R}(n)\approx n$;
\item for each integer $t\ge 0$, there exist infinitely many rational functions $R\in \F_q(x)\setminus \F_q[x]$ such that $N_{f, R}(n)\approx n^t$ and $M_{f, R}(n)\approx \deg(R)^n$.
\end{enumerate}
\end{theorem}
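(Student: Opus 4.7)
The plan is to reduce Theorem~\ref{thm:gr} to its polynomial analogue, Theorem~2.6 of~\cite{R}, by conjugating with an appropriate element of $\PGL(2,q)$ and then transporting the conclusions through Lemma~\ref{lem:same}. Fix $f\in \mathcal M_q$ and a matrix $A=\begin{pmatrix}a&0\\c&d\end{pmatrix}\in \mathrm{GL}(2,\F_q)$ with $c\ne 0$, so that $[A]\in \PGL(2,q)$ is nontrivial and satisfies the $b=0$ hypothesis of Lemma~\ref{lem:same}. Consequently, for every polynomial $g\in \F_q[x]$ of positive degree $k$, the conjugate $g^A\in \F_q(x)\setminus \F_q[x]$ is a rational function of degree $k$.

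Next I would verify that (a monic rescaling of) $[A]\circ f$ again lies in $\mathcal M_q$ and has the same degree as $f$. Since $\PGL(2,q)$ acts on $\mathbb P^1(\F_q)$ by bijections, Lemma~\ref{action} shows that the $\F_q$-roots of $[A]\circ f$ correspond via $[A]\ast$ to the $\F_q$-roots of $f$; the absence of the latter forces the absence of the former. Moreover, the four arithmetic functions of Definition~\ref{def:arith} depend only on the irreducible factorization and are insensitive to scaling by elements of $\F_q^*$, so identifying $[A]\circ f$ with its monic version is harmless.

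With this reduction in place, I would apply Theorem~2.6 of~\cite{R} to the polynomial $[A]\circ f\in \mathcal M_q$: for part (i) this produces infinitely many polynomials $g\in \F_q[x]$ (drawn, as in~\cite{R}, from a linearized family) satisfying $M_{[A]\circ f,\,g}(n)\approx n$, and for part (ii) it produces, for every fixed $t\ge 0$, infinitely many polynomials $g$ with $N_{[A]\circ f,\,g}(n)\approx n^t$ and $M_{[A]\circ f,\,g}(n)\approx \deg(g)^n$. Setting $R:=g^A$, Lemma~\ref{lem:same} yields
\[
\mathcal F_{f,R}(n)=\mathcal F_{[A]\circ f,\,g}(n)
\]
for each of the four arithmetic functions of Definition~\ref{def:arith}, while $\deg(R)=\deg(g)$ converts the exponential in (ii) into $\deg(R)^n$. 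Because the action $g\mapsto g^A$ of $\PGL(2,q)$ on $\overline{\F}_q(x)\setminus \F_q$ is injective, distinct $g$'s yield distinct $R$'s, giving the required infinite families of rational functions.

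The main technical point I foresee is the bookkeeping needed to align the polynomial $[A]\circ f$ with the precise hypotheses of Theorem~2.6 of~\cite{R} (namely, ensuring monicity and verifying that the linearized family used in~\cite{R} remains available after conjugation). This should be routine, since affine conjugation only permutes roots over $\overline{\F}_q$ and rescales leading coefficients, leaving every relevant invariant unchanged.
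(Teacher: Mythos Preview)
Your approach is exactly the paper's: the proof given there is the one line ``Combining Lemma~\ref{lem:same} with Theorem~2.6 of~\cite{R}'', and you have correctly spelled out the intended mechanism (check that a monic rescaling of $[A]\circ f$ lies in $\mathcal M_q$, transport the arithmetic functions through the identity $\mathcal F_{f,g^A}(n)=\mathcal F_{[A]\circ f,g}(n)$, and use injectivity of $g\mapsto g^A$ to get infinitely many $R$).

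There is, however, one genuine slip, which you inherit from what appears to be a typo in the final clause of Lemma~\ref{lem:same}. With $b=0$ the M\"obius map $[A]\bullet z=\tfrac{az+c}{d}$ is affine, so $g^A=[A]\bullet g\circ[A^{-1}]\bullet$ is an affine conjugate of $g$ and hence still a \emph{polynomial}; indeed, in the displayed formula of the proof of Lemma~\ref{lem:same} the denominator $b\cdot[A^{-1}]\circ g(x)+d(a-bx)^k$ collapses to the constant $da^k$ when $b=0$. Thus your choice $A=\begin{pmatrix}a&0\\c&d\end{pmatrix}$ produces $R=g^A\in\F_q[x]$, not $R\in\F_q(x)\setminus\F_q[x]$ as required. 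The fix is to take $b\ne 0$ instead: then $\phi_A^{-1}(\infty)=-d/b$ is a finite point, and $g^A$ is a polynomial only in the degenerate case $g(x)+d/b=\lambda(x+d/b)^k$, which the separable linearized polynomials used in Theorem~2.6 of~\cite{R} (of degree $q^t\ge q$) never satisfy. With this correction the rest of your argument goes through unchanged.
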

From Proposition~5.18 of~\cite{R}, we can also extend item (i) of the previous theorem to the function $A_{f, R}(n)$. 
\subsection{Some open problems}
We end this section by extending some open problems that are proposed in~\cite{R}. In what follows, $R\in \F_q(x)$ is a rational function and $f\in \F_q[x]$ is a polynomial of positive degree with at least one root that is not $R$-critical. Theorem~\ref{thm:gr} implies that $M_{f, R}$ may have linear or exponential growth if $f\in \mathcal M_q$. We believe that these are the only possible cases.
\begin{problem}\label{p1}
Prove or disprove: either $M_{f, R}(n)\approx n$ or $\log M_{f, R}(n)\gg n$.
\end{problem}

We observe that $M_{f, R}(n)\cdot N_{f, R}(n)\ge \delta_{f, R}(n)$ for every $n\ge 0$. In particular, there exists $d_0>1$ such that for every $n\gg 1$, either $M_{f, R}(n)>d_0^n$ or $N_{f, R}(n)>d_0^n$. However, this is not sufficient to conclude that at least one of these functions have exponential growth. Motivated by these observations, we propose the following problem.

\begin{problem}\label{p2}
Prove or disprove: either $\log M_{f, R}(n)\gg n$ or $\log N_{f, R}(n)\gg n$.
\end{problem}

Since $M_{f, R}(n)\cdot N_{f, R}(n)\ge \delta_{f, R}(n)$, a positive answer to Problem~\ref{p1} implies a positive answer to Problem~\ref{p2}. 

\begin{problem}\label{p3}
Prove or disprove: $A_{f, R}(n)\gg n$.
\end{problem}
We have seen that $A_{f, R}(n)\approx n$ for infinitely many rational functions $R$. In particular, Positive answer to Problem~\ref{p3} implies that the bound $A_{f, R}(n)\gg n$ is sharp on the growth type.


\begin{thebibliography}{1}
\bibitem{A05} Ali, N.: Stabilit\'{e} des polyn\^{o}mes. \emph{Acta Arith.} 119,  no. 1, (2005), 53--63.

\bibitem{AM00} Ayad, M. and McQuillan, D. L.: Irreducibility of the iterates of a quadratic
polynomial over a field. \emph{Acta Arith.} 93 (2000), no. 1, 87--97. Corrigendum: \emph{Acta Arith.} 99 (2001), no. 1, 97.

\bibitem{B} {Benedetto, R., Ingram, P., Jones, R., Manes, M., Silverman, J.~H. and Tucker, T.}
\newblock {Current trends and open problems in arithmetic dynamics}
\newblock {\em Bull. Amer. Math. Soc.}  56 (2019), no. 4, 611--685. 


\bibitem{GOS} G\'{o}mez-P\'{e}rez, D., Ostafe, A. and Shparlinski, I.: On irreducible divisors of iterated polynomials. \emph{Rev. Mat. Iberoam.} 30 (2014), no. 4, 1123--1134.

\bibitem{HBM} {Heath-Brown, D.~R. and Micheli, G.}
\newblock{Irreducible polynomials over finite fields
produced by composition of quadratics}
\newblock {\em Rev. Mat. Iberoam.} 35  (2019), 847--855.

\bibitem{J08} Jones, R. 
\newblock The density of prime divisors in the arithmetic dynamics of quadratic
polynomials. 
\newblock {\em J. Lond. Math. Soc.} 78 (2008), no. 2, 523--544.

\bibitem{JB12} Jones, R. and Boston, N.: Settled polynomials over finite fields. \emph{Proc. Amer.
Math. Soc.} 140 (2012), no. 6, 1849--1863.

\bibitem{K85} Odoni, R. W. K.: The Galois theory of iterates and composites of polynomials.
\emph{Proc. London Math. Soc.} 51 (1985), no. 3, 385--414

\bibitem{PRW} Panario, D., Reis, L. and Wang, Q.
\newblock Construction of irreducible polynomials through rational transformations
\newblock{\em J. Pure Appl. Algebra} 224, no. 5 (2020), 106241.

\bibitem{R} Reis, L.
\newblock On the factorization of iterated polynomials.
\newblock {\em Rev. Mat. Iberoam.}, to appear (2020).

\bibitem{ST12}
Stichtenoth, H. and Topuzo\u{g}lu,A.~,
\newblock Factorization of a class of polynomials over finite fields.
\newblock {\em Finite Fields Appl.} 18 (2012) 108--122.

\end{thebibliography}
\end{document}